\title{On images of weak Fano manifolds} 
\author{Osamu Fujino} 
\author{Yoshinori Gongyo}
\date{2010/4/28, version 3.03}
\subjclass[2000]{Primary 14J45; Secondary 14N30, 14E30}
\keywords{Fano manifolds, weak Fano manifolds, 
log Fano varieties, canonical bundle formula, mod $p$ reduction}
\address{Department of Mathematics, Faculty of Science, 
Kyoto University, Kyoto 606-8502, Japan}
\email{fujino@math.kyoto-u.ac.jp}
\address{Graduate School of Mathematical Sciences, 
The University of Tokyo, 3-8-1 Komaba, 
Meguro, Tokyo, 153-8914 Japan.}
\email{gongyo@ms.u-tokyo.ac.jp}
\newcommand{\codim}[0]{{\operatorname{codim}}}
\newcommand{\Proj}[0]{{\operatorname{Proj}}}
\newcommand{\Exc}[0]{{\operatorname{Exc}}}
\newcommand{\Supp}[0]{{\operatorname{Supp}}}
\newcommand{\Spec}[0]{{\operatorname{Spec}}}
\newcommand{\ch}[0]{{\operatorname{char}}}
\newtheorem{thm}{Theorem}[section]
\newtheorem{lem}[thm]{Lemma}
\newtheorem{cor}[thm]{Corollary}
\newtheorem{claim}{Claim}
\newtheorem{conj}[thm]{Conjecture}
\theoremstyle{definition}
\newtheorem{ex}[thm]{Example}
\newtheorem{defn}[thm]{Definition}
\newtheorem{rem}[thm]{Remark}
\newtheorem*{ack}{Acknowledgments}       
\newtheorem*{notation}{Notation}
\newtheorem{say}[thm]{}
\begin{document}
\bibliographystyle{amsalpha+}

\maketitle 
\begin{abstract}
We consider a smooth projective morphism 
between smooth complex projective varieties. 
If the source space is a weak Fano (or Fano) manifold, 
then so is the target space. 
Our proof is Hodge theoretic. 
We do not need mod $p$ reduction 
arguments. 
We also discuss related topics and 
questions.  
\end{abstract}

\tableofcontents
\section{Introduction}\label{sec1} 
Let $f:X\to Y$ be a smooth projective 
morphism between smooth projective 
varieties defined over $\mathbb C$. 
The following theorem is one of the main results of this paper. 

\begin{thm}[{cf.~Theorem \ref{02}}]\label{thm1}
If $X$ is a weak Fano manifold, that is, 
$-K_X$ is nef and big, then so is $Y$.  
\end{thm}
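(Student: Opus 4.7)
The strategy is to reduce the problem to a canonical bundle formula computation, replacing the classical mod-$p$ input by Hodge-theoretic semi-positivity. Since $-K_X$ is nef and big, the Kawamata--Shokurov base point free theorem (applied with $D = -K_X$, noting $aD - K_X = -(a+1)K_X$ is nef and big for every $a \geq 1$) yields base point freeness of $|{-mK_X}|$ for some $m \gg 0$. I would pick a sufficiently general member $D_0 \in |{-mK_X}|$ and set $B := \tfrac{1}{m} D_0$; Bertini guarantees that $(X, B)$ is klt, while by construction $K_X + B \sim_{\mathbb{Q}} 0$. Thus $f \colon (X, B) \to Y$ is a klt-trivial fibration in the sense of the Fujino--Mori canonical bundle formula.

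The next step is to apply that formula, which yields
\[
K_X + B \sim_{\mathbb{Q}} f^{*}(K_Y + B_Y + M_Y),
\]
with $B_Y \geq 0$ the discriminant $\mathbb{Q}$-divisor and $M_Y$ the moduli $\mathbb{Q}$-divisor. The key input is that $M_Y$ is nef, via the semi-positivity theorem for variations of Hodge structure; this is precisely the ingredient replacing mod-$p$ reduction. Combined with $K_X + B \sim_{\mathbb{Q}} 0$, this forces $K_Y + B_Y + M_Y \sim_{\mathbb{Q}} 0$, so $-K_Y \sim_{\mathbb{Q}} B_Y + M_Y$. Because $f$ is smooth and $B$ is general, $B_Y$ vanishes: for every prime divisor $P \subset Y$, the pullback $f^{*}P$ is reduced (as $f$ is smooth) and, over the generic point of $P$, the pair $(X, B + f^{*}P)$ is log canonical by transversality of $B$ to $f^{*}P$; hence the log canonical threshold equals $1$ and $P$ contributes trivially to $B_Y$. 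Thus $-K_Y \sim_{\mathbb{Q}} M_Y$, and nefness of $-K_Y$ follows from nefness of $M_Y$.

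What remains is bigness of $-K_Y$, which I expect to be the main obstacle. First, the general fiber $F$ is itself weak Fano: writing $-K_X \sim_{\mathbb{Q}} A + E$ with $A$ ample and $E$ effective (Kodaira's lemma), for $F$ general the restriction $-K_F = A|_F + E|_F$ is a sum of an ample and an effective divisor, hence big; combined with nefness this gives $(-K_F)^k > 0$, where $k = \dim F$. Expanding $(-K_X)^n > 0$ (with $n = \dim X$, $d = \dim Y$) via $-K_X \equiv f^{*}M_Y + (-K_{X/Y})$ and using the projection formula produces a leading term $\binom{n}{d} M_Y^d \cdot (-K_F)^k$ plus correction terms of the form $M_Y^{i} \cdot f_{*}((-K_{X/Y})^{n-i})$ for $0 \leq i < d$. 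Controlling these correction terms so as to deduce $M_Y^d > 0$ is the delicate step; I would again invoke Hodge-theoretic positivity, such as Viehweg-style weak positivity of direct images associated with the family of weak Fano fibers, to close the argument.
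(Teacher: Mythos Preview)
Your approach to nefness is in the right spirit and parallels the paper's Theorem~\ref{01}: both use semi-ampleness of $-K_X$ (via Kawamata--Shokurov), write $K_X+B\sim_{\mathbb Q}0$ for a general $B=\frac{1}{m}D_0$, and invoke the canonical bundle formula to obtain a nef moduli part. However, your claim that the discriminant $B_Y$ vanishes is not justified. You assert that $(X,B+f^*P)$ is log canonical over the generic point of \emph{every} prime $P\subset Y$ ``by transversality of $B$ to $f^*P$'', but for a \emph{fixed} general $B$ such transversality holds only over a dense open of $Y$, not over every codimension-one point. By inversion of adjunction the question is whether $(X_{\eta_P},\,\tfrac{1}{m}D_0|_{X_{\eta_P}})$ is lc, and for special $P$ the restriction $D_0|_{X_{\eta_P}}$ can be singular enough that this fails for the chosen $m$; since $D_0$ itself depends on $m$, one cannot simply enlarge $m$ after the fact. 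The paper avoids this by arguing curve by curve: given $C\subset Y$, it chooses $A$ (your $D_0$) so that $A$ is relatively smooth over an open $U$ meeting $C$, passes to a resolution $\mu:Y'\to Y$, and shows that $E+\Delta_0$ is effective with support in $Y'\setminus U'$, whence $C\cdot(-K_Y)=C'\cdot(E+\Delta_0+M)\geq 0$. It never claims the discriminant vanishes globally.

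The more serious gap is bigness. You flag the intersection-theoretic expansion as ``the delicate step'' and leave it unfinished; controlling the mixed terms $M_Y^{\,i}\cdot f_*\bigl((-K_{X/Y})^{\,n-i}\bigr)$ by weak positivity is not a clear route, and in any case Viehweg's results concern pushforwards of powers of $\omega_{X/Y}$, not of its inverse. The paper's argument is different and clean: by Kodaira's lemma one finds a small effective $\Delta$ with $(X,\Delta)$ klt and $-(K_X+\Delta)$ \emph{ample}; then the log Fano result (Theorem~\ref{06}) produces an effective $\Delta_Y$ with $(Y,\Delta_Y)$ klt and $-(K_Y+\Delta_Y)$ ample, forcing $-K_Y$ big. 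The missing idea in your proposal is this passage through a genuine log Fano structure: ampleness of $-(K_X+\Delta)$ lets one absorb a small multiple $\varepsilon f^*H$ with $H$ ample on $Y$, and it is this ample twist---tracked through the discriminant as in Claim~\ref{clC}---that pushes bigness down to $Y$, rather than any direct top-intersection count.
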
 

Although we could not find Theorem \ref{thm1} in the literature, 
it seems to be known to some experts. 
Here, we claim the originality of our proof. 
Our proof of Theorem \ref{thm1} is Hodge theoretic. 
We do not need mod $p$ reduction arguments. 
More precisely, we obtain Theorem \ref{thm1} as an application of 
Kawamata's positivity 
theorem (cf.~\cite{kawamata}). By the same method, 
we can recover the well-known result on Fano manifolds. 

\begin{thm}[{cf.~Theorem \ref{03}}]\label{thm2} 
If $X$ is a Fano manifold, that is, $-K_X$ is ample, 
then so is $Y$. 
\end{thm}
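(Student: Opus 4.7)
The plan is to mirror the Hodge-theoretic strategy used for Theorem \ref{thm1}: apply Kawamata's canonical bundle formula, together with the positivity of the moduli part from \cite{kawamata}, to a klt boundary on $X$ whose $\mathbb{Q}$-class is an ample perturbation of $-K_X$. The perturbation is what produces an extra strictly ample contribution to $-K_Y$, and this is precisely what distinguishes the Fano conclusion from the weak Fano one.

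First I will fix an ample divisor $A$ on $Y$. Because the ample cone is open, $-K_X - \epsilon f^*A$ remains ample for all sufficiently small $\epsilon \in \mathbb{Q}_{>0}$. Taking $\epsilon = 1/m$ with $m$ sufficiently large and divisible, the linear system $|-mK_X - f^*A|$ is very ample, and a general member $D$ is smooth by Bertini. Setting $\Delta := \frac{1}{m}D$ then gives a klt pair $(X,\Delta)$ satisfying
\[
K_X + \Delta \sim_{\mathbb{Q}} -\epsilon f^*A.
\]

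Next I will apply Kawamata's canonical bundle formula to $f:X\to Y$ and the pair $(X,\Delta)$; after Stein factorization (whose intermediate finite étale base is Fano iff $Y$ is, so no information is lost) this yields a decomposition
\[
K_X + \Delta \sim_{\mathbb{Q}} f^*(K_Y + B_Y + M_Y),
\]
where $B_Y$ is the discriminant part and $M_Y$ is the moduli part. Since $f$ is smooth and $(X,\Delta)$ is globally klt, neither a ramification contribution nor a non-lc contribution over any prime divisor of $Y$ can appear, so $B_Y = 0$. By Kawamata's positivity theorem \cite{kawamata}, $M_Y$ is nef on $Y$. Comparing the two expressions for $K_X+\Delta$ and using the injectivity of $f^* \colon N^1(Y)_{\mathbb{Q}} \to N^1(X)_{\mathbb{Q}}$ (a consequence of the surjectivity of $f$), I conclude
\[
-K_Y \equiv \epsilon A + M_Y.
\]
The right-hand side is ample plus nef, hence ample, so $-K_Y$ is ample and $Y$ is Fano.

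The main obstacle will be the careful verification that the discriminant $B_Y$ really vanishes, which I expect to follow cleanly from the smoothness of $f$ together with the global klt-ness of $(X,\Delta)$. Once this is in place the rest of the argument runs in parallel with the weak Fano case, the sole new ingredient being the extra ample summand $\epsilon A$ carried along from the perturbation; this is precisely what upgrades the conclusion from nef (as one would get by setting $\epsilon = 0$) to ample.
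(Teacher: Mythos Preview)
Your strategy---perturb by $\epsilon f^*A$, apply the canonical bundle formula, and read off $-K_Y \equiv \epsilon A + M_Y$---is appealing, but the key step ``$B_Y = 0$'' is not justified by the reason you give, and this is where the argument breaks down. The coefficient of a prime divisor $Q \subset Y$ in the discriminant is $1 - c_Q$, where $c_Q$ is the log canonical threshold of $f^*Q$ with respect to $(X,\Delta)$ over $\eta_Q$. Smoothness of $f$ makes $f^*Q$ reduced, so $c_Q \le 1$; but $c_Q = 1$ means $(X,\Delta + f^*Q)$ is lc over $\eta_Q$, which by inversion of adjunction is the statement that $(X_{\eta_Q},\tfrac{1}{m}D|_{X_{\eta_Q}})$ is lc. Global klt-ness of $(X,\Delta)$ does \emph{not} give this: it controls $\Delta$ on $X$, not the restriction of $\Delta$ to an arbitrary fiber. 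Concretely, over any $Q$ lying in the discriminant of $f|_D:D\to Y$, the divisor $D|_{X_{\eta_Q}}$ is singular and its lct can drop below $m$, forcing $c_Q<1$. (For instance, if $D$ is simply tangent to a fiber at one point and $\dim X-\dim Y=1$, then $D|_{X_{\eta_Q}}$ has a point of multiplicity $2$ and one needs $m\ge 2$.) It may well be that $B_Y=0$ once $m$ is large and $D$ is general enough, but this requires a genuine uniform bound on the fiberwise singularities of $D$, which you do not supply. There is a second, related gap: Kawamata's positivity theorem (Theorem~\ref{ka-th} here) yields nefness of the moduli part only on a model where the simple-normal-crossing hypotheses are in force. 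Since $\Supp D$ is not relatively normal crossing over all of $Y$, one must first pass to a modification $\mu:Y'\to Y$; the moduli part is then nef on $Y'$, but nefness need not descend to $Y$.

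The paper avoids both issues by arguing curve by curve rather than globally. It first uses Theorem~\ref{02} to get $-K_Y$ nef and big (hence semi-ample), and then, for each curve $C\subset Y$, chooses a general very ample $H$ and a general $A\in|-m(K_X+\varepsilon f^*H)|$ so that the open set $U$ over which everything is already in SNC position meets $C$. On the resulting model $\mu:Y'\to Y$ one has $-\mu^*K_Y\sim_{\mathbb Q}E+\Delta_0+M$ with $M$ nef and $E+\Delta_0$ effective, supported in $Y'\setminus U'$; moreover $\Delta_0$ visibly contains $\varepsilon H'$ (cf.\ Claim~\ref{clC}), so $C'\cdot(-\mu^*K_Y)\ge \varepsilon\,C'\cdot H'>0$. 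The adaptive choice of boundary for each $C$ is precisely what replaces your unproven global vanishing $B_Y=0$.
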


Our proof of Theorem \ref{thm2} is completely different from 
the original one by Koll\'ar, Miyaoka, and Mori in \cite{kmm}. 
It is the first proof which does not use mod $p$ reduction 
arguments. 
We raise a conjecture on the semi-ampleness of 
anti-canonical divisors. 

\begin{conj}\label{conj1} 
If $-K_X$ is semi-ample, then so is $-K_Y$. 
\end{conj}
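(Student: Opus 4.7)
The plan is to run the Kawamata--Ambro canonical bundle formula argument, in the spirit of the proofs of Theorems~\ref{thm1} and \ref{thm2}. Since $-K_X$ is semi-ample, choose $m$ sufficiently divisible so that $|-mK_X|$ is base point free, and pick a general member $\Delta \in |-mK_X|$. By Bertini, applied on $X$ and on the general fiber of $f$, the divisor $\Delta$ is smooth and meets every fiber of $f$ transversally. Setting $D := \frac{1}{m}\Delta$, the pair $(X,D)$ is klt and
$$
K_X + D \sim_{\mathbb{Q}} 0,
$$
so that $f \colon (X,D) \to Y$ is a klt-trivial fibration.

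Applying the canonical bundle formula produces $\mathbb{Q}$-divisors $M_Y$ (the moduli part) and $B_Y$ (the discriminant) on $Y$ with
$$
K_X + D \sim_{\mathbb{Q}} f^{*}(K_Y + M_Y + B_Y).
$$
Combined with $K_X + D \sim_{\mathbb{Q}} 0$ and the fact that $f$ has connected fibers, this yields $-K_Y \sim_{\mathbb{Q}} M_Y + B_Y$. Because $f$ is smooth and $D$ is general (hence horizontal and meeting fibers transversally), at a generic point of any prime divisor $P \subset Y$ the pair $(X, D + f^{*}P)$ has simple normal crossings with coefficients $\tfrac{1}{m}$ and $1$; the log canonical threshold along $f^{*}P$ is therefore exactly one, so no prime divisor contributes to the discriminant. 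Hence $B_Y = 0$ and the statement reduces to
$$
-K_Y \sim_{\mathbb{Q}} M_Y.
$$

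Kawamata's positivity theorem gives that $M_Y$ is nef, and the conjecture therefore reduces to the semi-ampleness of the moduli part $M_Y$. This is the main obstacle: semi-ampleness of the moduli $b$-divisor in the canonical bundle formula is essentially the content of Ambro's conjecture and remains a central open problem. In the geometric situation at hand there is some hope of progress, because the moduli part admits a Hodge-theoretic description via the variation of Hodge structure on the cyclic covers of $X \to Y$ branched along $\Delta$, whose fibers are smooth klt Calabi--Yau pairs obtained from a base point free anti-canonical system. One would expect the low relative-dimensional cases (for instance, when $\dim X - \dim Y \leq 2$, or when the relative Iitaka fibration of $-K_{X/Y}$ has low-dimensional fibers) to be accessible from known semi-ampleness results for Hodge bundles and the moduli part; a proof of Conjecture~\ref{conj1} in full generality, however, would likely require genuinely new input on Ambro's conjecture itself.
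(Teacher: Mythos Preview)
Your reduction is exactly the one the paper gives in Remark~\ref{011}: apply the canonical bundle formula to $K_X+\tfrac1m A\sim_{\mathbb Q}0$ for a general $A\in|-mK_X|$, and observe that Conjecture~\ref{conj1} would follow from the (open) semi-ampleness of the moduli part $M$. Your concluding remarks about low relative dimension and the Hodge-theoretic interpretation via cyclic covers also match Remarks~\ref{011} and~\ref{012}. So the strategy is the same and, like the paper, you correctly do \emph{not} claim a proof in general.

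There is, however, a genuine gap in your justification of $B_Y=0$. Bertini gives that a general $\Delta\in|-mK_X|$ is smooth on $X$ and relatively normal crossing over a Zariski open $U\subset Y$, but it does \emph{not} guarantee that $\Delta$ meets every fibre transversally. For a prime divisor $P\subset Y$ whose generic point lies outside $U$, the restriction $\Delta|_{X_{\eta_P}}$ can be singular, and by inversion of adjunction the pair $(X,\tfrac1m\Delta+f^*P)$ is lc over $\eta_P$ iff $(X_{\eta_P},\tfrac1m\Delta|_{X_{\eta_P}})$ is lc; this can fail if the tangency of $\Delta$ with the fibre is of high order relative to $m$. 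Your sentence ``hence horizontal and meeting fibers transversally'' glosses over this.

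The paper avoids the issue entirely: rather than asserting $B_Y=0$ for one fixed $\Delta$, it fixes an arbitrary point $y\in Y$, chooses $A$ so that $y\in U$, passes to a resolution $\mu:Y'\to Y$ making $Y\setminus U$ snc, and shows that the effective divisor $E+\Delta_0$ is supported in $Y'\setminus U'$ and hence misses $y$. Then, assuming $M$ is semi-ample, one finds $D\in|-mK_Y|$ with $y\notin\Supp D$; since $y$ was arbitrary, $-K_Y$ is semi-ample. This point-by-point device is the correct replacement for your unjustified global claim $B_Y=0$.
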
 

We reduce Conjecture \ref{conj1} to another conjecture 
on canonical bundle formulas and give affirmative answers 
to Conjecture \ref{conj1}  
in some special cases (cf.~Remark \ref{011} 
and Theorem \ref{torsion}). In this paper, 
we obtain the following theorem, 
which is a key result for the proof of 
Theorem \ref{thm1} and Theorem \ref{thm2}. 

\begin{thm}[{cf.~Theorem \ref{01}}]\label{thm3} 
If $-K_X$ is semi-ample, then $-K_Y$ is nef. 
\end{thm}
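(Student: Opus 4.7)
The strategy is to use the semi-ampleness of $-K_X$ to endow $f \colon X \to Y$ with a klt-trivial fibration structure, and then to identify $-K_Y$, up to $\mathbb{Q}$-linear equivalence, with the nef moduli part supplied by Kawamata's positivity theorem.

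Choose $m > 0$ such that $|{-mK_X}|$ is base point free. By Bertini, a general member $D \in |{-mK_X}|$ is smooth; setting $\Delta := \tfrac{1}{m}D$, one obtains a klt pair $(X,\Delta)$ with
\[
K_X + \Delta \sim_{\mathbb{Q}} 0.
\]
Via the Stein factorization of $f$, which is a finite étale cover of $Y$ (since $f$ is smooth) and along which the nefness of the anti-canonical class descends, we may further assume that $f$ has connected fibers.

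Next, apply the canonical bundle formula to the klt-trivial fibration $(X,\Delta) \to Y$: there exist $\mathbb{Q}$-divisors $B_Y$ (the discriminant part) and $M_Y$ (the moduli part) on $Y$ satisfying
\[
K_X + \Delta \sim_{\mathbb{Q}} f^*(K_Y + B_Y + M_Y),
\]
whence $-K_Y \sim_{\mathbb{Q}} B_Y + M_Y$. Kawamata's positivity theorem \cite{kawamata}, the key input, asserts that the moduli part $M_Y$ is nef. Since $f$ is smooth, $f^*P$ is reduced for every prime divisor $P \subset Y$, and a direct log canonical threshold computation gives $B_Y \ge 0$.

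The main obstacle is to promote ``$B_Y$ effective, $M_Y$ nef'' to the genuine nefness of the sum $B_Y + M_Y = -K_Y$, which does not follow formally. By choosing $D$ sufficiently general, one arranges that $\Delta$ has no vertical component over $Y$; together with the smoothness of $f$, this forces $B_Y = 0$, so that $-K_Y \sim_{\mathbb{Q}} M_Y$ is nef. The technical heart of the argument is therefore this careful Bertini-type choice of $D$ controlling the vertical part of $\Delta$, combined with the invocation of Kawamata's deep positivity theorem, whose proof proceeds via variations of Hodge structure.
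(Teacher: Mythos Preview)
Your argument has a genuine gap at the crucial step: the implication ``$\Delta$ has no vertical component and $f$ is smooth, hence $B_Y=0$'' is not correct as stated. The coefficient of a prime divisor $P\subset Y$ in $B_Y$ is $1-c_P$, where $c_P$ is the log canonical threshold of $f^*P$ with respect to $(X,\Delta)$ over the generic point of $P$. Since $f$ is smooth, $f^*P$ is reduced, and inversion of adjunction shows that $c_P=1$ if and only if the pair $\bigl(X_{\eta_P},\,\tfrac{1}{m}D|_{X_{\eta_P}}\bigr)$ is log canonical. But nothing in ``$D$ horizontal'' prevents the restriction $D|_{X_{\eta_P}}$ from being singular: the locus of such $P$ is precisely the discriminant of $f|_D:D\to Y$, which is typically a divisor on $Y$. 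Over its components the fibres of $D$ acquire singular points, and whether $\tfrac{1}{m}$ times such a fibre is still lc depends on the multiplicities of those singularities relative to $m$. Your justification does not address this at all; making it work would require a second Bertini-type argument bounding the singularities of the fibres of $f|_D$ for general $D$, and it is not clear this succeeds when $-K_X$ is only semi-ample (so that $|{-mK_X}|$ need not define an embedding).

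The paper's proof avoids this difficulty by \emph{not} attempting to make the discriminant vanish. Instead it fixes a test curve $C\subset Y$ first, and then chooses the general member $A\in|{-mK_X}|$ so that $A$ is smooth over some open set $U$ meeting $C$. After passing to a birational model $\mu:Y'\to Y$ that is an isomorphism over $U$, one has $-\mu^*K_Y\sim_{\mathbb Q} M+(E+\Delta_0)$ with $M$ nef and $E+\Delta_0$ effective and supported on $Y'\setminus U'$. Since the strict transform $C'$ is not contained in that support, $C\cdot(-K_Y)=C'\cdot M+C'\cdot(E+\Delta_0)\geq 0$. The key idea you are missing is exactly this: adapt the auxiliary divisor to each curve rather than seeking one choice that kills the discriminant globally.
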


We note that the proof of Theorem \ref{thm3} is also an application of 
Kawamata's positivity theorem. We note that 
it is the first time that Theorem \ref{thm3} 
is proved without mod $p$ reduction 
arguments. The reader will recognize that Kawamata's positivity theorem 
is very powerful. 
We can find related topics in 
\cite{zhang} and \cite[Section 3.6]{debarre}. 
Note that both of them depend on mod $p$ reduction arguments. 

We summarize the contents of this paper. 
Section \ref{sec2} is a preliminary section. We recall 
Kawamata's positivity theorem (cf.~Theorem \ref{ka-th}) here. 
In Section \ref{sec3}, we treat log Fano varieties with only kawamata log 
terminal singularities. The result obtained in this section will be used in 
Section \ref{sec4}. 
In Section \ref{sec4}, we prove Theorem \ref{thm1}, 
Theorem \ref{thm2}, and some related theorems. 
In Section \ref{sec5}, we give 
some comments and questions on related topics. 
In the final section:~Section \ref{sec-app}, which is an appendix, 
we give a mod $p$ reduction 
approach to Theorem \ref{thm1}. 

This paper is an expanded version of the second author's 
note \cite{gongyo}. 
It is also the part II of the first author's paper \cite{fujinoa}. 
 
\begin{ack} 
The first author would like to thank Takeshi Abe and 
Kazuhiko Yamaki for fruitful discussions. 
He also thanks Shunsuke Takagi, 
Kazunori Yasutake, and Karl Schwede for useful comments. 
He was partially supported by The Inamori Foundation and by 
the Grant-in-Aid for Young Scientists (A) $\sharp$20684001 from 
JSPS. 
The second 
author would like 
to express his deep gratitude to his supervisor 
Professor Hiromichi Takagi 
for teaching him various techniques of the log minimal model program. 
He also would like to thank Doctor Kazunori Yasutake, 
who introduced the question on weak Fano manifolds 
in the seminar held at the Nihon University in December 2009. 
The second author thanks Doctor Kiwamu Watanabe for reading 
his note \cite{gongyo}, 
Professor Shunsuke Takagi and Doctor Takuzo Okada for fruitful 
discussions. He is indebted to Doctor Daizo Ishikawa. 
Finally, the authors would like to thank Hiroshi Sato for constructing 
an interesting example. 
\end{ack}

We will work over $\mathbb C$, the complex 
number field, from Section \ref{sec2} 
to Section \ref{sec4}.

\section{Preliminaries}\label{sec2} 

We will make use of the standard notation as in the book \cite{km}. 

\begin{notation}
For a $\mathbb Q$-divisor 
$D=\sum _{j=1}^r d_j D_j$ on a normal variety $X$ such that 
$D_j$ is a prime divisor for 
every $j$ and $D_i\ne D_j$ for $i\ne j$, we define  
$$D^{+}=\sum_{d_j> 0}d_j D_j \ \text{\ and\ } \ D^{-}=-\sum_{d_j< 0}d_j D_j.$$ 
We denote the {\em{round-up}} of $D$ by $\ulcorner D\urcorner$. 
Furthermore, let $f:X \to Y$ be a surjective morphism of varieties. 
We define
$$ D^{h}=\sum_{f(D_j)=Y}d_jD_j \ \text{\ and\ } \ D^{v}=D-D^{h}.$$ 

Let $X$ be a normal variety and $\Delta$ an effective $\mathbb Q$-divisor 
on $X$ such that $K_X+\Delta$ is $\mathbb Q$-Cartier. 
Let $\varphi:Y\to X$ be a projective resolution such that the union of 
the exceptional locus of $\varphi$ and the strict transform of $\Delta$ has 
a simple normal crossing support on $Y$. 
We put 
$$
K_Y=\varphi^*(K_X+\Delta)+\sum _i a_i E_i
$$
where $E_i$ is a prime divisor for every $i$ and $E_i\ne E_j$ for 
$i\ne j$. 
The pair $(X, \Delta)$ is called {\em{kawamata log terminal}} 
({\em{klt}}, for short) (resp.~{\em{log canonical}} ({\em{lc}}, for short)) 
pair if $a_i>-1$ (resp.~$a_i\geq -1$) for every $i$. 
\end{notation}

\begin{defn}[Relative normal crossing divisors]
Let $f:X \to Y$ be a smooth surjective morphism 
between smooth varieties with connected fibers and 
$D=\sum_i D_i$ a reduced divisor on $X$ such that 
$D^{h}=D$, where $D_i$ is a prime divisor for every $i$.  
We say that $D$ is {\em{relatively normal crossing}} 
if $D$ satisfies the condition that for each 
closed point $x \in X$, there exits 
an analytic open neighborhood $U$ and $u_1, 
\dots , u_k \in \mathcal O_{X,x}$ inducing a 
regular system of parameter on 
$f^{-1}f(x)$ at $x$, where $k=\dim f^{-1}f(x)$, such 
that $D\cap U =\{ u_1 \cdots u_l=0 \}$ 
for some $l$ with $0\leq l\leq k$. 
\end{defn}

Let us recall Kawamata's positivity theorem in \cite{kawamata}. 
It is the main ingredient of this paper. 

\begin{thm}[Kawamata's positivity theorem]\label{ka-th}
Let $f: X \to Y$ be a surjective morphism 
of smooth projective varieties with 
connected fibers. 
Let $P=\sum_{j} P_j$ and $Q=\sum_{l} Q_l$ be simple 
normal 
crossing divisors on $X$ and $Y$, respectively, 
such that $f^{-1}(Q)\subseteq P$ and $f$ is 
smooth over $Y \setminus Q$. 
Let $D=\sum_{j} d_jP_j$ be 
a $\mathbb Q$-divisor {\em{(}}$d_j$'s may 
be negative or zero{\em{)}}, which satisfies 
the following conditions{\em{:}}
\begin{itemize}
\item[(1)] $f:\Supp D^{h} \to Y$ is relatively 
normal crossing over $Y\setminus Q$ 
and $f(\Supp D^{v}) \subseteq Q$,
\item[(2)] $d_j<1$ unless $\codim _Yf(P_j)\geq 2$, 
\item[(3)] $\dim_{\mathbb C(\eta)}f_*\mathcal O
(\ulcorner 
-D\urcorner)\otimes_{\mathcal O_Y}\mathbb C(\eta)=1$, 
where $\eta$ is the generic point of $Y$, and
\item[(4)] $K_X+D \sim_{\mathbb Q} f^*(K_Y+L)$ 
for some $\mathbb Q$-divisor $L$ on $Y$. 
\end{itemize}
Let 
\begin{eqnarray} f^*(Q_l) &=& 
\sum_{j}w_{lj}P_j, \ \text{where}\ w_{lj}>0, \nonumber\\
\bar{d}_j&=&\frac{d_j+w_{lj}-1}{w_{lj}}\ 
\text{if}\ f(P_j)=Q_l, \nonumber\\
\delta_{l}&=& \mathrm{max}\{\bar{d_j}| f(P_j)=Q_l\}, \nonumber\\
\Delta_{0}&=&\sum \delta_{l}Q_{l},\ \text{and} \nonumber\\
M&=&L-\Delta_{0}.\nonumber
\end{eqnarray}
Then $M$ is nef. 
We sometimes call $M$ {\em{(}}resp.~$\Delta_0${\em{)}} 
the {\em{moduli part}} {\em{(}}resp.~{\em{discriminant 
part}}{\em{)}}.  
\end{thm}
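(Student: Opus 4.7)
\medskip

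The plan is to prove Kawamata's positivity theorem by reducing the nefness of $M$ to a Hodge-theoretic semipositivity statement for a direct image sheaf on a suitable branched cover of $Y$. The overall framework is the canonical bundle formula: the difference $L$ captured in $K_X+D\sim_{\mathbb Q} f^*(K_Y+L)$ decomposes naturally into a discriminant part $\Delta_0$, which records how $D$ and the ramification of $f$ interact over $Q$, and a moduli part $M=L-\Delta_0$, which reflects the variation of the fibers. Hypotheses (2) and (3) are precisely what is needed so that the fibers behave like a family of ``log Calabi--Yau'' varieties after passing to an appropriate cover.

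First I would perform a base change by a sufficiently ramified finite cover $\pi:Y'\to Y$ branched along $Q$, chosen so that $\pi^*Q_l$ is divisible by all the $w_{lj}$ that appear. After desingularizing the fibre product and replacing $f$ by a new morphism $f':X'\to Y'$, one checks that the transformed divisor $D'$ satisfies $K_{X'}+D'\sim_{\mathbb Q} f'^*(K_{Y'}+L')$ with $L'\sim_{\mathbb Q}\pi^*M$ modulo an effective divisor, and that the new discriminant coefficients $\bar d'_j$ all become $\leq 0$, i.e.\ after this ramified base change the discriminant part is absorbed. The numerical relations in the definition of $\bar d_j$ and $\delta_l$ are designed exactly to make this bookkeeping work.

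Next comes the central Hodge-theoretic input. Because $f'_*\mathcal O_{X'}(\ulcorner -D'\urcorner)$ is generically of rank one, one can construct a cyclic cover $g:V\to X'$ ramified along the fractional components of $D'$ so that on an appropriate desingularization, $(f'\circ g)_*\omega_{V/Y'}$ contains as a direct summand a line bundle whose class is $L'$ up to $\mathbb Q$-linear equivalence on the open part of $Y'$. This sheaf sits as the bottom piece of the Hodge filtration of a polarizable variation of Hodge structure associated with the middle cohomology of the fibres of $V\to Y'$, and by the Fujita--Kawamata semipositivity theorem its canonical extension across the boundary is a nef locally free sheaf. Since $L'$ is a $\mathbb Q$-line bundle extracted from this nef sheaf, and since nefness descends along the finite surjective map $\pi$, we conclude that $M$ is nef on $Y$.

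The principal obstacle is the semipositivity step, which is the deep input: controlling the canonical extension of the Hodge bundle across $Q$ requires the nilpotent orbit theorem and careful curvature estimates in the style of Griffiths, Fujita, and Kawamata. A secondary difficulty, and the one where Kawamata's own contribution is decisive, is the precise matching between the covering construction, the coefficients $\bar d_j$ coming from the ramification weights $w_{lj}$, and the divisor $L'$ one actually extracts from the direct image; the exact shape of $\delta_l$ in the statement is forced by this compatibility. Once these two ingredients are in place, the remainder of the argument is formal.
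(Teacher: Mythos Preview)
The paper does not prove this theorem. Theorem~\ref{ka-th} is \emph{recalled} from Kawamata's paper \cite{kawamata} and used as a black box throughout; there is no proof of it anywhere in the present paper to compare your proposal against. So strictly speaking the comparison you were asked to make is vacuous.

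That said, your outline is a faithful sketch of how the result is actually established in \cite{kawamata} (and in earlier work of Fujita and Kawamata on semipositivity): the reduction via a Kawamata cover $\pi:Y'\to Y$ branched along $Q$ to kill the discriminant, the cyclic cover trick to realize the moduli part inside $(f'\circ g)_*\omega_{V/Y'}$, and the appeal to the Fujita--Kawamata semipositivity theorem for the lowest Hodge piece of a polarizable VHS with unipotent monodromy. Your identification of the two genuine difficulties---the analytic semipositivity input and the combinatorial bookkeeping that forces the shape of $\delta_l$---is accurate. If anything, one could add that condition (2) is what guarantees the pair is klt along the general fiber (so the cyclic cover has the right cohomology), and that unipotence of local monodromy after the cover is needed for the canonical extension to coincide with the Hodge bundle; but these are refinements, not gaps.
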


\begin{rem}
In Theorem \ref{ka-th}, we note that $\delta_l$ can be 
characterized as follows. 
If we put 
$$
c_l=\sup \{t\in \mathbb Q\, | \, K_X+D+tf^*Q_l \ \text{is lc 
over the generic point of} \ Q_l\},  
$$ 
then $\delta_l=1-c_l$. 
\end{rem}

We give a remark on the Stein factorization. 
We will use Lemma \ref{04} in Section \ref{sec4}. 
See also Remark \ref{rem-stein} below. 

\begin{lem}[Stein factorization]\label{04}
Let $f:X\to Y$ be a smooth projective 
morphism between smooth varieties. 
Let 
$$f:X\overset{h}{\longrightarrow}
Z\overset{g}{\longrightarrow}Y$$
be the Stein factorization. 
Then $g:Z\to Y$ is {}\'etale. 
Therefore, $h:X\to Z$ is a smooth 
projective morphism between smooth varieties 
with connected fibers. 
\end{lem}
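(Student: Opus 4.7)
My plan is to analyze the Stein factorization $f = g\circ h$ with $Z = \Spec_Y f_*\mathcal O_X$ and show that $g$ is finite, flat, and unramified, hence \'etale; the conclusions for $h$ then follow essentially formally.

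First, I would establish that $f_*\mathcal O_X$ is a locally free $\mathcal O_Y$-algebra of finite rank. Since $f$ is smooth and projective, it is flat and proper, so cohomology and base change applies: on each fiber $X_y=f^{-1}(y)$ the space $H^0(X_y,\mathcal O_{X_y})$ is a finite-dimensional $\mathbb C$-vector space whose dimension equals the number of connected components of $X_y$ (as $X_y$ is smooth, hence reduced, and projective, so $H^0(X_y,\mathcal O_{X_y})\cong \mathbb C^{N(y)}$). Upper semi-continuity of the fiber dimension of $f_*\mathcal O_X$ combined with the local constancy of the number of components on connected base (or a direct base change argument) shows that this rank is locally constant, which gives local freeness. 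Consequently $g\colon Z\to Y$ is a finite flat morphism.

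Next I would verify that $g$ is unramified. The schematic fiber of $g$ over $y\in Y$ is
$$
g^{-1}(y) \,=\, \Spec\bigl((f_*\mathcal O_X)\otimes _{\mathcal O_Y}\kappa(y)\bigr) \,=\, \Spec H^0(X_y,\mathcal O_{X_y})
$$
using flat base change for the smooth (hence flat) proper morphism $f$. As noted above, $H^0(X_y,\mathcal O_{X_y})$ is a finite product of copies of $\mathbb C$, so $g^{-1}(y)$ is reduced with residue fields equal to $\mathbb C=\kappa(y)$. Thus $g$ is unramified, and since it is already finite and flat, it is \'etale.

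Finally, since $Y$ is smooth and $g$ is \'etale, $Z$ is smooth. The equality $f=g\circ h$ together with the smoothness of $f$ and the \'etaleness of $g$ forces $h\colon X\to Z$ to be smooth (smoothness can be checked after \'etale base change, or one sees directly that the sheaf of relative differentials $\Omega_{X/Z}$ coincides with $\Omega_{X/Y}$ and is locally free of the correct rank). The fibers of $h$ are connected by the very construction of the Stein factorization ($h_*\mathcal O_X=\mathcal O_Z$), so $h$ is a smooth projective morphism between smooth varieties with connected fibers.

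The only point requiring real care is the reducedness of $g^{-1}(y)$: this is where the hypothesis that $f$ is smooth (not merely flat) is used in an essential way, because it guarantees that each fiber $X_y$ is smooth and therefore its algebra of global sections is \'etale over $\mathbb C$. Everything else is formal manipulation of the Stein factorization.
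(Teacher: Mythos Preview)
Your proposal is correct and follows essentially the same route as the paper: both arguments show that $g$ is finite and flat via the local freeness of $f_*\mathcal O_X$ (using cohomology and base change for the smooth proper morphism $f$), then compute the scheme-theoretic fiber $g^{-1}(y)=\Spec H^0(X_y,\mathcal O_{X_y})$ to be a disjoint union of reduced points over $\mathbb C$, concluding that $g$ is unramified and hence \'etale. Your additional remarks on why $h$ is smooth (via $\Omega_{X/Z}\cong\Omega_{X/Y}$) and on where the smoothness of $f$ is genuinely used are accurate elaborations of steps the paper leaves implicit.
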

\begin{proof}
By assumption, $R^if_*\mathcal O_X$ is locally 
free and $$R^if_*\mathcal O_X\otimes \mathbb C(y)\simeq 
H^i(X_y, \mathcal O_{X_y})$$ for every $i$ and any $y\in Y$. 
By definition, $Z=\Spec _Yf_*\mathcal O_X$. 
Since $g_*\mathcal O_Z\simeq f_*\mathcal O_X$ is locally free, 
$g$ is flat. 
By construction, 
$$Z_y=\Spec H^0(X_y, \mathcal O_{X_y})$$ consists of 
$n$ copies 
of $\Spec \mathbb C$ for any $y\in Y$, 
where $n$ is the rank of $f_*\mathcal O_X$. 
Therefore, $g$ is unramified. 
This implies that $g$ is {}\'etale. 
Thus, $Z$ is a smooth variety and $h:X\to Z$ is a smooth 
morphism with connected fibers. 
\end{proof}

\section{Log Fano varieties}\label{sec3}

The proof of the following theorem is essentially the 
same as \cite[Theorem 1.2]{fujinoa}. 
We will use similar arguments in Section \ref{sec4}. 

\begin{thm}\label{06} 
Let $f:X\to Y$ be a proper surjective morphism 
between normal projective varieties 
with connected fibers. 
Let $\Delta$ be an effective $\mathbb Q$-divisor 
on $X$ such that 
$(X, \Delta)$ is klt. 
Assume that 
$-(K_X+\Delta+\varepsilon f^*H)$ is semi-ample, 
where $\varepsilon$ is a positive rational 
number and $H$ is an ample 
Cartier divisor 
on $Y$. 
Then we can find an effective 
$\mathbb Q$-divisor $\Delta_Y$ on $Y$ such that 
$(Y, \Delta_Y)$ is klt and 
$-(K_Y+\Delta_Y)$ is ample. 
In particular, if $K_Y$ is $\mathbb Q$-Cartier, then $-K_Y$ is big. 
\end{thm}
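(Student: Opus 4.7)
The plan is to apply Kawamata's positivity theorem (Theorem~\ref{ka-th}) in the form of a canonical bundle formula for $f$.

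The first reduction is to arrange that $K_X+\Delta\sim_{\mathbb Q} f^{*}(-\varepsilon H)$. Since $-(K_X+\Delta+\varepsilon f^{*}H)$ is semi-ample, for $m$ sufficiently divisible the linear system $|{-}m(K_X+\Delta+\varepsilon f^{*}H)|$ is base point free; a general member $E$ yields an effective $\mathbb Q$-divisor $D:=\frac{1}{m}E$ with $D\sim_{\mathbb Q}-(K_X+\Delta+\varepsilon f^{*}H)$ and $(X,\Delta+D)$ still klt, by a standard Bertini argument on a log resolution of $(X,\Delta)$. After replacing $\Delta$ by $\Delta+D$, we may assume $K_X+\Delta\sim_{\mathbb Q} f^{*}(-\varepsilon H)$ with $(X,\Delta)$ klt.

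Next, take a log resolution $\varphi:X'\to X$ of $(X,\Delta)$ and a resolution $\pi:Y'\to Y$ arranged so that the induced morphism $f':X'\to Y'$ satisfies the hypotheses of Theorem~\ref{ka-th}. Writing $K_{X'}=\varphi^{*}(K_X+\Delta)+\sum a_iE_i$ with $a_i>-1$ (klt), set $D':=-\sum a_iE_i$; every coefficient of $D'$ is strictly less than $1$. Hypotheses (1) and (2) hold by the choice of resolutions; (3) holds because $\ulcorner -D'\urcorner$ is effective and $\varphi$-exceptional, so its restriction to the general fibre of $f'$ is exceptional over the general fibre of $f$ and hence has a one-dimensional space of sections; and (4) holds with $L:=\pi^{*}(-\varepsilon H)-K_{Y'}$, since $K_{X'}+D'\sim_{\mathbb Q}(f')^{*}\pi^{*}(-\varepsilon H)$. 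Theorem~\ref{ka-th} then produces a discriminant $\Delta_0$ on $Y'$ and a nef moduli part $M=L-\Delta_0$, giving
\[
-(K_{Y'}+\Delta_0)\;\sim_{\mathbb Q}\;M+\varepsilon\,\pi^{*}H.
\]

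To conclude, descend to $Y$. The klt hypothesis on $(X,\Delta)$ together with the threshold description $\delta_l=1-c_l$ (remark after Theorem~\ref{ka-th}) forces every coefficient of $\Delta_0$ to be strictly less than $1$, and tracking which components come from prime divisors on $Y$ versus $\pi$-exceptional ones (essentially the argument of \cite[Theorem~1.2]{fujinoa}) produces an effective $\mathbb Q$-divisor $\Delta_Y$ on $Y$ with $(Y,\Delta_Y)$ klt and $-(K_Y+\Delta_Y)\sim_{\mathbb Q} M_Y+\varepsilon H$ for a nef $\mathbb Q$-divisor $M_Y$ on $Y$. Since $\varepsilon H$ is ample, the right-hand side is ample. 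Finally, if $K_Y$ is $\mathbb Q$-Cartier, then so is $\Delta_Y$, and $-K_Y=-(K_Y+\Delta_Y)+\Delta_Y$ is the sum of an ample and an effective $\mathbb Q$-divisor, hence big.

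The main obstacle lies in this descent step: verifying that the discriminant $\Delta_0$ and moduli part $M$ on the resolution $Y'$ yield a well-defined effective $\Delta_Y$ on $Y$ and a nef $M_Y$ on $Y$ fitting into the canonical bundle formula, so that $(Y,\Delta_Y)$ is actually klt with $K_Y+\Delta_Y$ being $\mathbb Q$-Cartier and $-(K_Y+\Delta_Y)$ being ample rather than merely nef and big. This is the technical heart of the canonical bundle formula for klt-trivial fibrations, and is where the klt hypothesis on $(X,\Delta)$ and the explicit threshold formula $\delta_l=1-c_l$ are essential.
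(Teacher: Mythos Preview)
Your overall strategy is the same as the paper's: reduce to a klt pair with $K_X+\Gamma\sim_{\mathbb Q}0$ (or $\sim_{\mathbb Q}f^*(-\varepsilon H)$), pass to resolutions, apply Kawamata's positivity theorem to get a nef moduli part $M$ and discriminant $\Delta_0$ on $Y'$, then descend to $Y$. The reduction and the verification of conditions (1)--(4) are fine.

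The genuine gap is the descent, and it is not just bookkeeping. Your assertion that one obtains ``a nef $\mathbb Q$-divisor $M_Y$ on $Y$'' is unjustified and in general false: Kawamata's theorem gives $M$ nef on the resolution $Y'$, but nefness is not preserved under pushforward by the birational morphism $\pi:Y'\to Y$, and $\pi_*M$ need not even be $\mathbb Q$-Cartier since $Y$ is only assumed normal. This is not something one extracts from \cite[Theorem 1.2]{fujinoa}; that argument never produces a nef moduli part on the singular base. You flag the descent as ``the main obstacle,'' but the target you aim at (nefness of $M_Y$) is the wrong one.

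The paper closes the gap by a different mechanism, which is the real content of the proof. First, after making $H$ very ample and replacing it by a general member, it \emph{includes} $\varepsilon f^*H$ in the klt boundary $\Gamma$, so that $K_X+\Gamma\sim_{\mathbb Q}0$; this forces $\Delta_0^+\geq\varepsilon\,\mu^*H$ (Claim~\ref{clC}). Second, and crucially, the resolution $Y'$ is chosen so that $\mu^*H-\sum_l s_lQ_l$ is \emph{ample on $Y'$} for suitable small $s_l\geq 0$. Then $M+\varepsilon'(\mu^*H-\sum_l s_lQ_l)$ is ample on $Y'$ and may be replaced by a general effective $B$; together with Claim~\ref{clB} ($\mu_*\Delta_0^-=0$) this packages everything into an effective boundary $\Omega'$ on $Y'$ with $K_{Y'}+\Omega'\sim_{\mathbb Q}0$ and coefficients $<1$. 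Pushing forward \emph{effective} divisors is harmless, and one ends with $K_Y+\Omega\sim_{\mathbb Q}0$, $\Omega\geq\varepsilon''H$, so $\Delta_Y:=\Omega-\varepsilon''H$ does the job. At no stage is a nef divisor on $Y$ invoked except the tautological $-(K_Y+\Delta_Y)\sim_{\mathbb Q}\varepsilon''H$. Your outline becomes a proof once you replace the unjustified ``$M_Y$ nef on $Y$'' by this perturb-to-ample-then-to-effective trick on $Y'$.
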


\begin{proof}
By replacing $H$ with $mH$ and $\varepsilon$ with 
$\frac{\varepsilon}{m}$ for some sufficiently 
large positive integer $m$, 
we can assume that $H$ is very ample and $\varepsilon <1$. 
By replacing $H$ with 
a general member of $|H|$, 
we can further assume that $(X, \Delta+\varepsilon f^*H)$ is 
klt. 
Let $A$ be a general member of a free linear system 
$|-m(K_X+\Delta+\varepsilon f^*H)|$ 
such that $(X,\Delta+\varepsilon f^*H+\frac{1}{m}A)$ 
is klt and
$$K_X+\Delta+\varepsilon f^*H+\frac{1}{m}A\sim_{\mathbb Q}0.$$
We put $\Gamma=\Delta+\varepsilon f^*H+\frac{1}{m}A$.
Then we consider the following commutative diagram: 
$$
\xymatrix{
   X' \ar[r]^{\nu} \ar[d]_{f'} & X \ar[d]^{f} \\
   Y' \ar[r]_{\mu} & Y,
} 
$$
where 
\begin{itemize}
\item[(i)] $X'$ and $Y'$ are smooth projective varieties,
\item[(ii)] $\nu$ and $\mu$ are projective birational morphisms,
\item[(iii)] we put $L=-K_{Y'}$ and 
define a $\mathbb Q$-divisor $D$ on $X'$ as follows: 
$$K_{X'}+D=\nu^*(K_X+\Gamma), $$ 
and 
\item[(iv)] there are simple normal 
crossing divisors $P$ on $X'$ and $Q$ on $Y'$ 
which satisfy the conditions (1) of 
Theorem \ref{ka-th} and there exists a 
set of sufficiently small non-negative 
rational numbers $\{s_l\}$ such 
that $\mu^*H-\sum_{l}s_lQ_l$ is ample. 
\end{itemize}
We see that $f':X'\to Y'$, $D$, and $L$ satisfy 
the conditions (1), (2), and (4) in 
Theorem \ref{ka-th}. Now we check the condition (3) in 
Theorem \ref{ka-th}. We put $h=f \circ \nu$.
\begin{claim}\label{clA}
$\mathcal O_Y = h_*\mathcal O_{X'}(\ulcorner  -D\urcorner)$
\end{claim}
\begin{proof}[Proof of {\em{Claim \ref{clA}}}] 
Since $(X, \Gamma)$ is klt, 
we see that 
$\ulcorner  -D\urcorner $ is effective and $\nu$-exceptional. 
Thus it holds 
that $\nu_{*}\mathcal O_{X'}(\ulcorner 
-D\urcorner)\simeq \mathcal O_{X}$. 
Since $f_{*}\mathcal O_{X}=\mathcal O_Y$, 
we have $\mathcal O_Y = h_*\mathcal O_{X'}
(\ulcorner  -D\urcorner)$.
\end{proof}
By Claim \ref{clA}, we 
see that $f':X'\to Y'$ and $D$ satisfy 
the condition (3) in Theorem \ref{ka-th} 
since $\mu$ is birational.
If we take $\mathbb Q$-divisors $\Delta_0$ and $M$ on $Y'$ as in 
Theorem \ref{ka-th}, then 
$$
K_{X'}+D\sim_{\mathbb Q}f'^*(K_{Y'}+M+\Delta_0) 
$$ and 
$M$ is nef. 
We have the following claim about $\Delta_0$.
\begin{claim}\label{clC}
$\Delta_0^{+}\geq\varepsilon \mu^*H$.
\end{claim}
\begin{proof}[Proof of {\em{Claim \ref{clC}}}] Since $H$ is 
general, $h^*H$ is reduced. We set $h^*H=\sum_{j} P_{k_j}$. 
Note that the coefficient of 
$P_{k_j}$ in $D$ is $\varepsilon$ for every $j$ 
by the generality of $H$ and $A$.
By the definition of $\bar{d}_{k_j}$, it holds that  
$$\bar{d}_{k_j}=d_{k_j}=\varepsilon.
$$
Thus we have $\Delta_0^{+}\geq \varepsilon \mu^*H$.
\end{proof}
We decompose $\varepsilon=
\varepsilon'+\varepsilon''$ such that 
$\varepsilon'$ and $\varepsilon''$ are positive 
rational numbers. Since $M$ is nef, 
$M+\varepsilon' (\mu^*H-\sum_{l}s_lQ_l)$ 
is ample. Hence, there exists an effective 
$\mathbb Q$-divisor 
$B$ such that 
$M+\varepsilon'(\mu^*H-\sum_{l}s_lQ_l) \sim_{\mathbb Q} B$, 
$(X, B+\varepsilon'\sum_{l}s_lQ_l+\Delta_0^{+}
+\varepsilon''\mu^*H)$ is klt, and 
$\Supp (B+\varepsilon'\sum_{l}s_lQ_l+\Delta_0^{+}
+\varepsilon''\mu^*H -\Delta_0^-)$ is 
simple normal crossing. If $\varepsilon'$ is a 
sufficiently small positive rational number, 
then we see that $$\Supp(B+\varepsilon'\sum_{l}s_lQ_l+\Delta_0^+
+\varepsilon''\mu^*H -\Delta_0^-)^-=\Supp\ \Delta_0^-.$$ 
We set 
$$\Delta_0'=\Delta_0^+-\varepsilon 
\mu^*H\ \ \text{and}\ \ \Omega'=B+
\varepsilon'\sum_{l}s_lQ_l+
\Delta_0'+\varepsilon''\mu^*H-\Delta_0^-.
$$
It holds that 
$$
K_{Y'}+\Omega'\sim_{\mathbb Q}K_{Y'}+L
\sim _{\mathbb Q}0. 
$$
By the following claim, $\mu_{*}\Omega'$ is effective.
\begin{claim}[cf.~Claim (B) in \cite{fujinoa}]\label{clB}
$\mu_{*}\Delta_0^-=0$.
\end{claim} 
\begin{proof}[Proof of {\em{Claim \ref{clB}}}]
Let $\Delta_{0}^{-} = -\sum_{k} 
\delta_{l_{k}}Q_{l_{k}}$, where $\delta_{l_{k}}<0$. 
If there exists $k$ and $j$ such that 
$\ulcorner  -d_j \urcorner < w_{l_{k}j}$, 
it holds that $-d_j+1 \leq w_{l_{k}j}$ 
since $w_{l_{k}j}$ is an integer. 
Then we obtain $\delta_{l_{k}}\geq 0$. 
Thus, it holds that $\ulcorner  -d_j \urcorner 
\geq w_{l_{k}j}$ for all $k$ and $j$. Therefore we have 
$\ulcorner  -D \urcorner \geq f'^*Q_{l_{k}}$. 
Since $\mathcal O_{Y'}=f'_{*}\mathcal O_{X'}$, we see that 
$f'_{*} \mathcal 
O_{X'}(\ulcorner  -D \urcorner) \supseteq \mathcal O_{Y'}(Q_{l_{k}})$. 
By Claim \ref{clA}, $\mu_{*}Q_{l_{k}}=0$. 
We finish the proof of Claim \ref{clB}. 
\end{proof}
We put $\Omega=\mu_{*}\Omega'$. 
Then we see that $\Omega$ is effective 
by Claim \ref{clB},  
$$K_{Y'} + \Omega'=
\mu^*(K_Y+\Omega),\ K_Y+\Omega \sim_{\mathbb Q} 
0,\ \text{and}\ \Omega \geq \varepsilon''H.$$
Thus $(Y, \Delta_Y)$ is klt and $-(K_Y+\Delta_Y)
\sim _{\mathbb Q}\varepsilon''H$ 
is ample if we put $\Delta_Y=\Omega-\varepsilon''H\geq 0$.  
We finish the proof of Theorem \ref{06}.  
\end{proof}

\begin{rem}\label{07} 
Let $(X, B)$ be a projective klt pair. 
Then $-(K_X+B)$ is semi-ample if and only if 
$-(K_X+B)$ is nef and abundant 
by \cite[Theorem 1.1]{fujino-ka}. 
\end{rem}

The following corollary is obvious by Theorem \ref{06}. 

\begin{cor}[{cf.~\cite[Theorem 2.9]{ps}}]\label{075} 
Let $f:X\to Y$ be a proper surjective morphism 
between normal projective varieties 
with connected fibers. 
Let $\Delta$ be an effective $\mathbb Q$-divisor 
on $X$ such that 
$(X, \Delta)$ is klt and $-(K_X+\Delta)$ is ample. 
Then there is an effective 
$\mathbb Q$-divisor $\Delta_Y$ on $Y$ such that 
$(Y, \Delta_Y)$ is klt and 
$-(K_Y+\Delta_Y)$ is ample. 
\end{cor}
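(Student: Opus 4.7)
The plan is to invoke Theorem \ref{06} directly, after absorbing an arbitrarily small positive multiple of the pullback of an ample class on $Y$ into the ample $\mathbb{Q}$-divisor $-(K_X+\Delta)$. Concretely, I would fix any ample Cartier divisor $H$ on $Y$ and note that $f^*H$ is nef (being the pullback of a nef class under a proper morphism). Since $-(K_X+\Delta)$ is ample by hypothesis, the class
$$-(K_X+\Delta+\varepsilon f^*H) \;=\; -(K_X+\Delta) \;-\; \varepsilon f^*H$$
remains ample, hence in particular semi-ample, for every sufficiently small positive rational number $\varepsilon$; this uses only the openness of the ample cone inside the N\'eron--Severi space, or, if one prefers to avoid that, one can choose $m\gg 0$ so that $-m(K_X+\Delta)-f^*H$ is very ample and take $\varepsilon=1/m$.

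With such $H$ and $\varepsilon$ fixed, every hypothesis of Theorem \ref{06} is met verbatim: $f\colon X\to Y$ is a proper surjective morphism with connected fibers between normal projective varieties, $(X,\Delta)$ is klt, and $-(K_X+\Delta+\varepsilon f^*H)$ is semi-ample. Theorem \ref{06} then supplies an effective $\mathbb{Q}$-divisor $\Delta_Y$ on $Y$ such that $(Y,\Delta_Y)$ is klt and $-(K_Y+\Delta_Y)$ is ample, which is precisely the conclusion to be proved.

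There is essentially no obstacle beyond what Theorem \ref{06} already handles: the corollary is just the case where the perturbation $\varepsilon f^*H$ can be absorbed into the strict positivity of $-(K_X+\Delta)$. The only line of genuine verification is that ample minus a small nef stays ample, which is standard; all the real work (constructing $\Delta_Y$ via the Stein-factored diagram, applying Kawamata's positivity theorem, checking Claims \ref{clA}--\ref{clB}, and producing the ample slack $\varepsilon''H$) is already encapsulated in Theorem \ref{06}.
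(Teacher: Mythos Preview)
Your proposal is correct and matches the paper's approach: the paper simply declares the corollary ``obvious by Theorem \ref{06}'' without writing out a proof, and your argument---absorbing a small multiple $\varepsilon f^*H$ into the ample divisor $-(K_X+\Delta)$ so that Theorem \ref{06} applies verbatim---is exactly the intended one-line reduction.
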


We close this section with an easy corollary of Theorem \ref{06}. 

\begin{cor}\label{08} 
Let $(X, \Delta)$ be a projective 
klt pair such that $-(K_X+\Delta)$ is semi-ample. 
Let $n$ be a positive integer such that 
$n(K_X+\Delta)$ is Cartier. 
Then there is an effective $\mathbb Q$-divisor 
$\Delta_Y$ on 
$$
Y=\Proj \bigoplus _{m\geq 0}
H^0(X, \mathcal O_X(-mn(K_X+\Delta)))
$$ 
such that $(Y, \Delta_Y)$ is klt and 
$-(K_Y+\Delta_Y)$ is ample. 
\end{cor}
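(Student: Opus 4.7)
The plan is to interpret $Y$ as the base of the semi-ample fibration of $-(K_X+\Delta)$ and then reduce to Theorem \ref{06}. Since $-n(K_X+\Delta)$ is a semi-ample Cartier divisor, its section ring is finitely generated, and the natural morphism $f\colon X\to Y$ to
$$
Y=\Proj\bigoplus_{m\geq 0}H^0(X,\mathcal O_X(-mn(K_X+\Delta)))
$$
is a projective surjective morphism onto a normal projective variety, with $f_*\mathcal O_X=\mathcal O_Y$ (so $f$ has connected fibers), and there exists an ample $\mathbb Q$-Cartier divisor $H_0$ on $Y$ satisfying $-(K_X+\Delta)\sim_{\mathbb Q}f^*H_0$.

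Next, I would choose a positive integer $N$ so that $H:=NH_0$ is an ample Cartier divisor on $Y$, and take any positive rational $\varepsilon<1/N$. Then
$$
-(K_X+\Delta+\varepsilon f^*H)\sim_{\mathbb Q}\Bigl(\tfrac{1}{N}-\varepsilon\Bigr)f^*H
$$
is the pullback of an ample, hence semi-ample, $\mathbb Q$-divisor on $Y$, and is therefore semi-ample on $X$. All the hypotheses of Theorem \ref{06} are met with this $\varepsilon$ and $H$, and the theorem then produces an effective $\mathbb Q$-divisor $\Delta_Y$ on $Y$ such that $(Y,\Delta_Y)$ is klt and $-(K_Y+\Delta_Y)$ is ample, which is precisely the conclusion of the corollary.

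The only step requiring any genuine thought is the preliminary claim that the natural map $X\to Y$ is a morphism with connected fibers onto a normal projective variety, with a distinguished ample $\mathbb Q$-Cartier divisor pulling back to $-(K_X+\Delta)$. This is a standard fact about semi-ample fibrations: for $m$ sufficiently large and divisible, $|-mn(K_X+\Delta)|$ is base-point free and defines a morphism $X\to Z\subset\mathbb P^N$ whose Stein factorization is $X\to Y\to Z$ with $Y$ normal and $f\colon X\to Y$ having connected fibers, and the resulting $Y$ coincides with the Proj of the section ring and is independent of the large $m$ chosen. Once this is granted, the corollary is an immediate application of Theorem \ref{06}.
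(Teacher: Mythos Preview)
Your proof is correct and follows exactly the same strategy as the paper: identify $Y$ as the base of the semi-ample fibration of $-(K_X+\Delta)$ so that $-(K_X+\Delta)\sim_{\mathbb Q}f^*H_0$ for an ample $\mathbb Q$-Cartier divisor $H_0$, then apply Theorem~\ref{06}. The paper's proof is terser and simply says ``apply Theorem~\ref{06}'' once the relation $-(K_X+\Delta)\sim_{\mathbb Q}f^*H$ is noted, whereas you spell out the choice of $N$ and $\varepsilon$ needed to match the literal hypotheses of that theorem; this extra care is harmless and arguably clarifying.
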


\begin{proof}
By definition, $Y$ is a normal projective variety and 
there is a projective surjective 
morphism $f:X\to Y$ 
with connected fibers such that 
$-(K_X+\Delta)\sim _{\mathbb Q}f^*H$, where 
$H$ is an ample $\mathbb Q$-Cartier 
$\mathbb Q$-divisor on $Y$. 
Then we can apply Theorem \ref{06}. 
\end{proof}

\section{Fano and weak Fano manifolds}\label{sec4}

In this section, we apply Kawamata's positivity theorem 
to smooth projective morphisms between 
smooth projective varieties. 

We note that the statement of the 
following theorem is weaker than 
\cite[Corollary 3.15 (a)]{debarre}. 
However, the proof of Theorem \ref{01} 
has potential for further generalizations. 
We describe it in details. 

\begin{thm}[{cf.~\cite[Corollary 3.15 (a)]{debarre}}]\label{01}
Let $f:X\to Y$ be a smooth projective morphism 
between smooth projective varieties with connected 
fibers. 
If $-K_X$ is semi-ample, 
then $-K_Y$ is nef. 
\end{thm}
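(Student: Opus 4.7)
The plan is to apply Kawamata's positivity theorem (Theorem~\ref{ka-th}) in the setup used in the proof of Theorem~\ref{06}, omitting the ample perturbation $\epsilon f^{*}H$, and then transport nefness from the birational model $Y'$ down to $Y$ by the negativity lemma. Take $m$ sufficiently divisible so that $|-mK_{X}|$ is base-point-free, let $A$ be a general member, and set $\Gamma=\frac{1}{m}A$. Then $(X,\Gamma)$ is klt by Bertini and $K_{X}+\Gamma\sim_{\mathbb Q} 0$. Form the commutative square
$$
\xymatrix{
X'\ar[r]^{\nu}\ar[d]_{f'} & X\ar[d]^{f}\\
Y'\ar[r]_{\mu} & Y
}
$$
as in Theorem~\ref{06}, with $X',Y'$ smooth projective, $\nu,\mu$ projective birational, and simple normal crossing $P\subset X'$, $Q\subset Y'$ chosen so that the $\mathbb Q$-divisor $D$ defined by $K_{X'}+D=\nu^{*}(K_{X}+\Gamma)$ meets the hypotheses of Theorem~\ref{ka-th}. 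Condition~(3) follows from Claim~A of Theorem~\ref{06} verbatim, and (4) gives $L\sim_{\mathbb Q}-K_{Y'}$ since $K_{X'}+D\sim_{\mathbb Q} 0$. Theorem~\ref{ka-th} then produces $M=L-\Delta_{0}$ nef on $Y'$.

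Next, decompose $A=A^{h}+f^{*}C_{A}$ into horizontal and vertical parts, which is possible because $f$ is smooth. For a general member $A$, either $-K_{X}$ is not pulled back from $Y$, in which case $A$ is irreducible and horizontal so $C_{A}=0$; or $-K_{X}\sim_{\mathbb Q} f^{*}E$ for a semi-ample $E$, forcing $A=f^{*}\tilde A$ with $\tilde A\in|mE|$ general and $\frac{1}{m}C_{A}\sim_{\mathbb Q} E$ semi-ample. The analog of Claim~B gives $\mu_{*}\Delta_{0}^{-}=0$. For $\mu_{*}\Delta_{0}^{+}$, suppose $Q_{l}\subset Y'$ is not $\mu$-exceptional and set $\bar Q_{l}:=\mu(Q_{l})$, a prime divisor on $Y$; by smoothness of $f$ and genericity of $A$, the $\mathbb Q$-divisor $\Gamma+tf^{-1}(\bar Q_{l})$ has simple normal crossing support over the generic point of $\bar Q_{l}$, with coefficient $t$ on $f^{-1}(\bar Q_{l})$ if $\bar Q_{l}\not\subset\Supp C_{A}$ and coefficient $\frac{1}{m}+t$ otherwise. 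The log canonical threshold is therefore $1$ in the first case and $1-\frac{1}{m}$ in the second, yielding $\delta_{l}=0$ or $\delta_{l}=\frac{1}{m}$. Hence $\mu_{*}\Delta_{0}=\frac{1}{m}C_{A}$ and $\mu_{*}M=-K_{Y}-\frac{1}{m}C_{A}$.

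To conclude, apply the negativity lemma (\cite[Lemma~3.39]{km}) to $N:=\mu^{*}\mu_{*}M-M$ on $Y'$. This $N$ is $\mu$-exceptional with $\mu_{*}N=0$, and on any $\mu$-contracted curve $C'$ we have $\mu^{*}\mu_{*}M\cdot C'=0$ while $M\cdot C'\ge 0$, so $-N$ is $\mu$-nef; the lemma forces $N\ge 0$. For any curve $C\subset Y$ with strict transform $\tilde C\subset Y'$, this yields $\mu_{*}M\cdot C=\mu^{*}\mu_{*}M\cdot\tilde C\ge M\cdot\tilde C\ge 0$, so $\mu_{*}M$ is nef; adding back the nef $\frac{1}{m}C_{A}$ gives $-K_{Y}=\mu_{*}M+\frac{1}{m}C_{A}$ nef. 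The hard part will be the discriminant analysis: the decomposition $A=A^{h}+f^{*}C_{A}$ and the log canonical threshold calculation for $\mu_{*}\Delta_{0}^{+}$ both exploit the smoothness of $f$ essentially, which is precisely how the hypothesis of the theorem enters the proof.
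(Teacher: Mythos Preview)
Your overall strategy—apply Kawamata's positivity theorem to $K_X+\frac{1}{m}A\sim_{\mathbb Q}0$ and then descend from $Y'$ to $Y$—matches the paper's, and the negativity-lemma argument showing $\mu_*M$ is nef is correct. The gap is in the discriminant analysis.

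You assert that for each non-$\mu$-exceptional component $Q_l$ with image $\bar Q_l\subset Y$, the support of $\Gamma+tf^{-1}(\bar Q_l)$ is simple normal crossing over the generic point of $\bar Q_l$. This fails precisely when $\bar Q_l$ is a divisorial component of the discriminant locus of $A\to Y$: over such $\bar Q_l$ the fiber $A_y$ is singular, so $A$ meets $f^{-1}(\bar Q_l)$ non-transversally at some point dominating $\bar Q_l$. Genericity of $A$ cannot save this, because $A$ is fixed once and for all and its discriminant over $Y$ will in general have divisorial components; those are exactly the non-$\mu$-exceptional $Q_l$ that the construction forces into $Q$. Without the SNC property you cannot read off $c_l=1$, so $\mu_*\Delta_0=\frac{1}{m}C_A$ is unjustified. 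One does get $\mu_*\Delta_0\ge 0$ from smoothness of $f$, hence $-K_Y\sim_{\mathbb Q}\mu_*M+\mu_*\Delta_0$ with the first summand nef and the second effective, but this alone does not rule out $-K_Y\cdot C<0$ for a curve $C\subset\Supp(\mu_*\Delta_0)$. (The dichotomy on the shape of a general $A$ is also not quite right—e.g.\ when $\kappa(-K_X)=1$ a general member can be reducible—but that is secondary.)

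The paper avoids any global control of the discriminant by letting $A$ depend on the test curve $C$. Given $C$, it chooses $A$ smooth over an open set $U\subset Y$ meeting $C$, takes $\mu$ to be an isomorphism over $U$, and forms the base change $\widetilde X=X\times_Y Y'$, which is smooth because $f$ is smooth. The computation then shows only that $E+\Delta_0$ is effective with $\Supp(E+\Delta_0)\subset Y'\setminus U'$; since the strict transform $C'$ meets $U'$, one gets $C'\cdot(E+\Delta_0)\ge 0$ without ever claiming that $\Delta_0$ vanishes. That curve-dependent choice of $A$ is the missing idea in your argument.
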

\begin{proof}
Let $C$ be an integral curve on $Y$. 
Let $A$ be a general member of the free linear system $|-mK_X|$.  
Then there is a non-empty Zariski 
open set $U$ of $Y$ such that 
$C\cap U\ne \emptyset$ and that $A$ is smooth 
over $U$. 
By construction, $K_X+\frac{1}{m}A\sim _\mathbb Q0$. 
Let $\mu:Y'\to Y$ be a resolution 
such that 
$\mu$ is an isomorphism over $U$ and $\mu^{-1}(Y\setminus U)$ 
is a simple normal crossing divisor on $Y'$. 
We consider 
the following commutative diagram. 
$$
\xymatrix{\widetilde X=X\times _YY'\ar[r]^{\ \ \  \ \ \ 
\varphi} \ar[d]_{\widetilde f}& X\ar[d]^{f}\\ 
Y' \ar[r]^{\mu}&Y
}
$$ 
We note that 
$\widetilde f: \widetilde X\to Y'$ is smooth. 
We write $K_{Y'}=\mu^*K_Y+E$. 
Then $\Supp E=\Exc (\mu)$, where $\Exc(\mu)$ is the exceptional locus 
of $\mu$,  and 
$E$ is effective. 
We put 
$$
K_{\widetilde X}+\widetilde D=\varphi^*(K_X+\frac{1}{m}A)
\sim _{\mathbb Q}0. 
$$
Then 
$$\widetilde D=-\widetilde f^*E+\varphi^*\frac{1}{m}A.
$$ 
Note that $K_{\widetilde X}=\varphi^*K_X+\widetilde f^*E$. 
We put $U'=\mu^{-1}(U)$. 
Then $\mu:U'\to U$ is an isomorphism. 
Let $\psi:X'\to \widetilde X$ be a resolution such that 
$\psi$ is an isomorphism over 
$\widetilde f^{-1}(U')$ 
and that $\Supp A'\cup \Supp f'^{-1}(Y'\setminus U')$ 
is a simple normal crossing 
divisor, where $A'$ is the strict transform of 
$A$ on $X'$ and $f'=\widetilde f\circ \psi:X'\to Y'$. 
We define 
$$
K_{X'}+D=\psi^*(K_{\widetilde X}+\widetilde D)
\sim _{\mathbb Q}0. 
$$ 
We can write 
$$
K_{X'}+D=f'^*(K_{Y'}+\Delta_0+M)
$$ 
as in Kawamata's positivity theorem (see Theorem \ref{ka-th}). 
We put $E=\sum _i e_i E_i$, where 
$E_i$ is a prime divisor 
for every $i$ and $E_i\ne E_j$ for 
$i\ne j$. 
The coefficient of $E_i$ in $\Delta_0$ is $1-c_i$, 
where 
$$
c_i=\sup \{t\in \mathbb Q \,|\, K_{X'}+D+tf'^*E_i \ {\text{is 
lc over the generic point of}} \ E_i\}. 
$$ 
By construction, 
$$
c_i=\sup \{t\in \mathbb Q \,|\, K_{\widetilde X}+
\widetilde D+t\widetilde {f}^*E_i \ {\text{is 
lc over the generic point of}} \ E_i\}. 
$$ 
Since 
$$
\widetilde D=-\widetilde f^*E+\varphi^*\frac{1}{m}A, 
$$ 
and $\varphi^*\frac{1}{m}A$ is effective, 
we can write $c_i=e_i+a_i$ for some 
$a_i\in \mathbb Q$ with $a_i\leq 1$. 
Thus, we have $1-c_i=1-e_i-a_i$. 
Therefore, the coefficient of $E_i$ in $E+\Delta_0$ is 
$$e_i+1-e_i-a_i=1-a_i\geq 0. $$
So, we can see that $E+\Delta_0$ is effective. 
Since $K_{Y'}+\Delta_0+M\sim _{\mathbb Q}0$ and 
$K_{Y'}=\mu^*K_Y+E$, 
we have 
$$
-\mu^*K_Y=-K_{Y'}+E\sim_{\mathbb Q} E+\Delta_0+M. 
$$ 
Let $C'$ be the strict transform of 
$C$ on $Y'$. 
Then 
\begin{align*}
C\cdot (-K_Y)&=C'\cdot (-\mu^*K_Y)\\ 
&=C'\cdot (E+\Delta_0 +M)\geq 0. 
\end{align*} 
It is because $M$ is nef 
and $\Supp (E+\Delta_0)\subset Y'\setminus U'$. 
Therefore, $-K_Y$ is nef. 
\end{proof}

We give a very important remark on Theorem \ref{01}. 

\begin{rem}[Semi-ampleness of $-K_Y$]\label{011} 
We use the same notation as in Theorem \ref{01} and 
its proof. 
It is conjectured that 
the {\em{moduli part}} $M$ is semi-ample 
(see, for example \cite[0.~Introduction]{ambro1}). 
Some very special cases of this conjecture were 
treated in \cite{fujino-nagoya} before \cite{ambro1}. 
Unfortunately, the results in \cite{fujino-nagoya} 
are useless for our purposes here. 
If this semi-ampleness conjecture 
is solved, then we will obtain 
that $-K_Y$ is semi-ample. 

Let $y\in Y$ be an arbitrary point. 
We can choose $A$ such that 
$y\in U$. Since 
$$
-\mu^*K_Y\sim _{\mathbb Q}M+E+\Delta_0, 
$$
$E+\Delta_0$ is effective, and $\Supp (E+\Delta_0)\subset 
Y'\setminus U'$, 
we can find a positive integer $m$ and an effective Cartier 
divisor $D$ on $Y$ such that 
$-mK_Y\sim D$ and that $y\not\in \Supp D$. It 
implies that $-K_Y$ is semi-ample. 

By \cite{kawamata1}, $M$ is semi-ample if $\dim Y=\dim X-1$. 
Therefore, $-K_Y$ is semi-ample when $\dim Y=\dim X-1$. 

In \cite[Theorem 3.3]{ambro2}, 
Ambro proved that $M$ is nef and 
abundant. 
So, if $Y$ is a surface, then we can check 
that $-K_Y$ is semi-ample 
as follows. 
If $\nu (Y', M)=\kappa (Y', M)=0$ or $1$, 
then $M$ is semi-ample. 
Therefore, we can apply the same argument as above. 
If $\nu (Y', M)=\kappa (Y', M)=2$, then 
$M$ is big. 
Since 
$$
-\mu^*K_Y\sim _{\mathbb Q}M+E+\Delta_0
$$
and $E+\Delta_0$ is effective, 
$-\mu^*K_Y$ is big. 
Therefore, $-K_Y$ is nef and big. 
In this case, $-K_Y$ is semi-ample by 
the Kawamata--Shokurov base point free theorem. 
Anyway, for an arbitrary point $y\in Y$, 
we can always find a positive integer $m$ and an effective 
Cartier divisor $D$ on $Y$ such that 
$-mK_Y\sim D$ and that $y\not \in \Supp D$. 
It means that $-K_Y$ is semi-ample. 

In the end, in Theorem \ref{01}, 
$-K_Y$ is semi-ample if $\dim Y\leq 2$. 
By combining the above results, we know 
that $-K_Y$ is semi-ample 
when $\dim X\leq 4$. 
We conjecture that $-K_Y$ is semi-ample 
if $-K_X$ is semi-ample without 
any assumptions on dimensions. 
\end{rem}

\begin{rem}\label{012} 
In Remark \ref{011}, 
we used Ambro's results in \cite{ambro1} 
and \cite{ambro2}. 
When we investigate the moduli part $M$ on 
$Y$ by the theory of 
variations of Hodge structures, we note the 
following construction. 
Let $\pi:V\to X$ be a cyclic cover associated 
to $m(K_X+\frac{1}{m}A)\sim 0$. 
In this case, $\pi$ is a finite cyclic cover which 
is ramified only 
along $\Supp A$. Since $\Supp A$ is relatively 
normal crossing over $U$, we can 
construct a simultaneous resolution 
$f\circ \pi:V\to Y$ and 
make the union of the exceptional locus 
and the inverse image of 
$\Supp A$ a simple normal crossing divisor 
and relatively normal crossing 
over $U$ by the canonical 
desingularization theorem. 
Therefore, the moduli part $M$ on $X$ 
behaves well under pull-backs. 
It is a very important remark. 
\end{rem}

The semi-ampleness of $-K_Y$ is not so 
obvious even when $-K_X\sim _{\mathbb Q}0$. 
The proof of the following theorem depends 
on some deep results on 
the theory of variations of Hodge 
structures (cf.~\cite{ambro2} and \cite{fujino-ka}). 

\begin{thm}\label{torsion}
Let $f:X\to Y$ be a smooth projective morphism between smooth 
projective varieties. 
Assume that $-K_X\sim _{\mathbb Q}0$. 
Then $-K_Y$ is semi-ample. 
\end{thm}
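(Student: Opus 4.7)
The plan is to specialize Kawamata's positivity theorem to the present smooth, boundary-free setting and then promote nefness to semi-ampleness via Ambro's refinement combined with the abundance-type characterization of \cite{fujino-ka}. First, I would reduce to the case where $f$ has connected fibers by passing to the Stein factorization $X \to Z \to Y$; by Lemma \ref{04} the map $Z \to Y$ is \'etale, and semi-ampleness of $-K$ descends along finite \'etale covers (via faithful flatness of $\pi_*\mathcal O_Z$), so it suffices to prove $-K_Z$ is semi-ample. Hence from the outset I assume $f$ has connected fibers.

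Next, I would apply Theorem \ref{ka-th} to $f:X\to Y$ with the trivial choice $P=0$, $Q=0$, $D=0$. Since $f$ is smooth everywhere, the smoothness condition over $Y\setminus Q = Y$ is automatic; conditions (1) and (2) are vacuous; condition (3) follows from $f_*\mathcal O_X=\mathcal O_Y$; and condition (4) is met by taking $L=-K_Y$, using $K_X\sim_{\mathbb Q}0$. With $Q=0$ the discriminant part vanishes, $\Delta_0=0$, so the moduli part $M=L\sim_{\mathbb Q}-K_Y$ is nef by Theorem \ref{ka-th}. This recovers Theorem \ref{01} in this special case, but only as a first step.

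To upgrade nef to semi-ample, I would invoke Ambro's Theorem 3.3 of \cite{ambro2} (the same input used in Remark \ref{011}), which shows that the moduli part $M$ is not merely nef but also abundant, i.e.\ $\nu(Y,M)=\kappa(Y,M)$. Consequently $-K_Y$ is itself nef and abundant. Finally, since $Y$ is smooth the pair $(Y,0)$ is klt, and Theorem 1.1 of \cite{fujino-ka} (restated as Remark \ref{07}) turns nefness and abundance of $-K_Y$ into semi-ampleness, finishing the argument.

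The main obstacle is the Ambro input: the proof that the moduli part $M$ is abundant, not just nef, rests on delicate results in the theory of polarized variations of Hodge structures (period maps, extension of Hodge bundles, etc.), and it is essential that the Kawamata formula be set up in a way that is compatible with pull-back to a simultaneous resolution of the associated cyclic cover, as indicated in Remark \ref{012}. Once Ambro's abundance is granted, the remaining ingredients — the Stein factorization reduction, the application of Theorem \ref{ka-th} with empty boundary, and the klt base-point-free criterion — are essentially formal; without it one only obtains the nefness already provided by Theorem \ref{01}.
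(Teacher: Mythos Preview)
Your proposal is correct and follows essentially the same route as the paper: reduce to connected fibers via Lemma~\ref{04}, identify the moduli part $M$ with $-K_Y$ (since $\Delta_0=0$ in this smooth, boundary-free setting), invoke \cite[Theorem~3.3]{ambro2} for nef and abundant, and finish with \cite[Theorem~1.1]{fujino-ka}. The only minor quibble is that ``faithful flatness of $\pi_*\mathcal O_Z$'' is not quite the right justification for descent of semi-ampleness along the \'etale cover; it is cleaner to note that nefness and abundance descend along finite surjective maps and then apply \cite{fujino-ka} directly on $Y$.
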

\begin{proof}
By the Stein factorization (cf.~Lemma \ref{04}), 
we can assume that 
$f$ has connected fibers. 
In this case, we can write 
$$
K_X\sim _{\mathbb Q}f^*(K_Y+M), 
$$ 
where $M$ is the moduli part. 
By \cite[Theorem 3.3]{ambro2}, we know that 
$M$ is nef and abundant. 
Therefore, $-K_Y$ is nef and abundant. 
This implies that $-K_Y$ is semi-ample by 
\cite[Theorem 1.1]{fujino-ka}. 
\end{proof}
 
The following theorem is one of the main 
results of this paper. We note that 
it was proved by Yasutake in a special case where 
$f:X\to Y$ is a $\mathbb P^n$-bundle (cf.~\cite{yasutake}). 

\begin{thm}[Weak Fano manifolds]\label{02}
Let $f:X\to Y$ be a smooth projective 
morphism between smooth projective 
varieties. 
If $X$ is a weak Fano manifold, 
then so is $Y$.  
\end{thm}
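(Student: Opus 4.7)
The plan is to combine Theorem~\ref{01} (nefness of $-K_Y$ from semi-ampleness of $-K_X$) with Theorem~\ref{06} (which propagates anticanonical positivity of a klt pair along a fibration). The idea is: once Theorem~\ref{01} delivers nefness, bigness can be extracted from Theorem~\ref{06} by choosing the boundary $\Delta$ on $X$ to absorb the extra ampleness provided by $-K_X$ being big rather than merely semi-ample.

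Concretely, I would first pass to the Stein factorization $f=g\circ h$ supplied by Lemma~\ref{04}, where $h\colon X\to Z$ is a smooth projective morphism with connected fibers between smooth projective varieties and $g\colon Z\to Y$ is \'etale. Since $g$ is \'etale, $K_{Z}=g^{*}K_{Y}$, and both nefness and bigness descend along the finite surjective morphism $g$, so it is equivalent to show that $Z$ is weak Fano.

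Next, since $-K_{X}$ is nef and big on the smooth projective variety $X$, the Kawamata--Shokurov base point free theorem gives that $-K_{X}$ is semi-ample; Theorem~\ref{01} applied to $h$ then yields that $-K_{Z}$ is nef. For bigness, fix an ample Cartier divisor $H$ on $Z$. Since $-K_{X}$ is big and the big cone is open in $N^{1}(X)_{\mathbb R}$, the divisor $-K_{X}-\varepsilon h^{*}H$ remains big for every sufficiently small rational $\varepsilon>0$. By a standard application of Kodaira's lemma and Bertini on the smooth variety $X$, one finds an effective $\mathbb Q$-divisor $\Delta$ on $X$ with $\Delta\sim_{\mathbb Q}-K_{X}-\varepsilon h^{*}H$ and such that $(X,\Delta)$ is klt. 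Then $K_{X}+\Delta+\varepsilon h^{*}H\sim_{\mathbb Q}0$, which is in particular semi-ample, so Theorem~\ref{06} produces an effective $\mathbb Q$-divisor $\Delta_{Z}$ on $Z$ with $(Z,\Delta_{Z})$ klt and $-(K_{Z}+\Delta_{Z})$ ample. Hence $-K_{Z}=-(K_{Z}+\Delta_{Z})+\Delta_{Z}$ is the sum of an ample and an effective divisor, and so is big.

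The main obstacle is the construction of the klt boundary $\Delta$: Kodaira's lemma only produces a decomposition $-K_{X}-\varepsilon h^{*}H\sim_{\mathbb Q}A+E$ with $A$ ample and $E$ effective, and in general $(X,E)$ need not be klt. The fix is the standard perturbation device of replacing $A$ by $\tfrac{1}{k}\sum_{i=1}^{k}A_{i}$ for general smooth members $A_{i}\in|kA|$ with $k$ large, and passing from $E$ to $\tfrac{1}{m}E'$ for $m$ sufficiently large (using that $-K_{X}-\varepsilon h^{*}H-\delta A$ is still big, so some multiple has a section) to push all coefficients below $1$. Because $X$ is smooth, this goes through with only routine care, and the rest of the argument is just bookkeeping with the two quoted theorems.
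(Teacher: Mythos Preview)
Your overall strategy is correct and coincides with the paper's: reduce to connected fibers via Lemma~\ref{04}, use the Kawamata--Shokurov base point free theorem plus Theorem~\ref{01} for nefness of $-K_Y$, and then invoke Theorem~\ref{06} with a suitable klt boundary on $X$ to obtain bigness. The paper's proof is essentially the same, only packaged slightly differently: it finds $\Delta$ with $(X,\Delta)$ klt and $-(K_X+\Delta)$ \emph{ample}, and then applies Theorem~\ref{06} directly (the hypothesis $-(K_X+\Delta+\varepsilon f^*H)$ semi-ample is then automatic for small $\varepsilon$).

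One technical point in your last paragraph deserves correction. Your proposed fix for constructing the klt boundary does not work as stated: from $-K_X-\varepsilon h^*H-\delta A$ merely big you cannot force the coefficients of $\tfrac{1}{m}E'$ below $1$ by taking $m$ large, because the fixed part of the linear system $|m(-K_X-\varepsilon h^*H-\delta A)|$ may contribute a component with multiplicity growing linearly in $m$. The clean argument (and what the paper does) uses the \emph{nefness} of $-K_X$: apply Kodaira's lemma to $-K_X$ itself to get $-K_X\sim_{\mathbb Q}A+E$ with $A$ ample and $E$ effective; then for small $t>0$ the pair $(X,tE)$ is klt (since $X$ is smooth) and
\[
-(K_X+tE)\ \sim_{\mathbb Q}\ (1-t)(-K_X)+tA
\]
is ample, being nef plus ample. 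With $\Delta=tE$ you can now feed this directly into Theorem~\ref{06}.
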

\begin{proof}
By taking the Stein factorization, we can assume that 
$f$ has connected fibers (cf.~Lemma \ref{04}). 
By Theorem \ref{01}, $-K_Y$ is nef since 
$-K_X$ is semi-ample by the Kawamata--Shokurov 
base point free theorem. By 
Kodaira's lemma, we can find an effective $\mathbb Q$-divisor 
$\Delta$ on $X$ such that $(X, \Delta)$ is klt and 
that $-(K_X+\Delta)$ is ample. 
By Theorem \ref{06}, we can find an 
effective $\mathbb Q$-divisor 
$\Delta_Y$ such that $-(K_Y+\Delta_Y)$ is ample. 
Therefore, $-K_Y$ is big. 
So, $-K_Y$ is nef and big. This means 
that $Y$ is a weak Fano manifold. 
\end{proof}

The following example is due to Hiroshi Sato. 

\begin{ex}[Sato]\label{sato}
Let $\Sigma$ be the fan in $\mathbb R^3$ whose rays are generated by 
$$
\begin{array}{llll}
x_1=(1, 0, 1), &x_2=(0, 1, 0), &x_3=(-1, 3, 0), &x_4=(0, -1, 0), \\
y_1=(0, 0, 1), &y_2=(0, 0, -1),
\end{array}
$$
and their maximal cones are 
$$
\begin{array}{llll} 
\langle x_1, x_2, y_1\rangle, \langle x_1, x_2, y_2\rangle, 
\langle x_2, x_3, y_1\rangle, \langle x_2, x_3, y_2\rangle,\\
\langle x_3, x_4, y_1\rangle, \langle x_3, x_4, y_2\rangle, 
\langle x_4, x_1, y_1\rangle, \langle x_4, x_1, y_2\rangle. 
\end{array}
$$
Let $\Delta$ be the fan obtained from $\Sigma$ by successive star subdivisions 
along the rays spanned by $$z_1=x_2+y_1=(0, 1, 1)$$ and 
$$z_2=x_2+z_1=2x_1+y_1=(0, 2, 1). $$
We see that $V=X(\Sigma)$, the toric 
threefold corresponding to the fan $\Sigma$ 
with respect to the lattice 
$\mathbb Z^3\subset \mathbb R^3$, 
is a $\mathbb P^1$-bundle 
over $Y=\mathbb P_{\mathbb P^1}(\mathcal O_{\mathbb P^1}
\oplus \mathcal O_{\mathbb P^1}(3))$. 
We note that the $\mathbb P^1$-bundle structure $V\to Y$ is induced 
by the projection $\mathbb Z^3\to \mathbb Z^2:~(x, y, z)\mapsto (x, y)$. 
The toric variety $X=X(\Delta)$ corresponding to the fan $\Delta$ was obtained 
by successive blow-ups from $V$. 
We can check that $X$ is a three-dimensional toric weak Fano manifold and that 
the induced morphism 
$f:X\to Y$ is a flat morphism onto $Y$ since every fiber of $f$ is one-dimensional.  
It is easy to see that 
$-K_Y$ is big but not nef.  
\end{ex}

Therefore, if $f$ is only flat, then 
$-K_Y$ is not always nef 
even when $X$ is a weak Fano manifold. 

Let us give a new proof of the well-known theorem by 
Koll\'ar, Miyaoka, and Mori (cf.~\cite{kmm}). 
We note that $Y$ is not always Fano if 
$f$ is only flat. There exists an example in \cite{wis}. 

\begin{thm}[{cf.~\cite[Corollary 2.9]{kmm}}]\label{03}
Let $f:X\to Y$ be a smooth projective 
morphism between smooth projective 
varieties. If $X$ is a Fano manifold, 
then so is $Y$.  
\end{thm}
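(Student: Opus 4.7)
The plan is to modify the proof of Theorem \ref{02}, exploiting the stronger hypothesis (ample, rather than merely semi-ample) to promote bigness of $-K_Y$ to ampleness. By Lemma \ref{04}, we may assume $f$ has connected fibers; by Theorem \ref{02}, $-K_Y$ is already nef and big. By Kleiman's criterion, it then suffices to prove $(-K_Y)\cdot C>0$ for every irreducible curve $C\subset Y$.

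Fix such a $C$. The idea is to apply Theorem \ref{06} with a choice of auxiliary data tailored to $C$ so as to produce a boundary $\Delta_Y$ whose support does not meet $C$ in a divisor. Choose a very ample divisor $H$ on $Y$ (a general member of a high multiple of an ample class) so that $C\not\subset\Supp H$, and a small $\varepsilon>0$ so that $-(K_X+\varepsilon f^*H)$ is still ample. For $m\gg 0$ and divisible, $|-m(K_X+\varepsilon f^*H)|$ is very ample. A fiberwise Bertini argument then produces a general $A$ in this linear system whose ``bad locus'' $B_A=\{y\in Y: A|_{X_y}\text{ is not smooth}\}$ does not contain $C$: for any $y_0\in C$, Bertini on the fiber $X_{y_0}$ gives some $A$ with $y_0\notin B_A$, and the condition $C\not\subset B_A$ is Zariski open in the linear system because $C$ is projective.

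Apply Theorem \ref{06} with $\Delta=0$ and these auxiliary choices $(H,\varepsilon,m,A)$. This yields an effective $\mathbb{Q}$-divisor $\Delta_Y$ with $(Y,\Delta_Y)$ klt and $-(K_Y+\Delta_Y)$ ample. Tracking the construction of $\Delta_Y$ in the proof of Theorem \ref{06} (using Claims A, B, C there) shows that
\[
\Supp\Delta_Y\subset\Supp\mu_*B\cup B_A\cup\Supp H,
\]
where $B$ is the general auxiliary effective divisor appearing in that proof and can be chosen so $C\not\subset\Supp\mu_*B$. Thus $C\not\subset\Supp\Delta_Y$, and we conclude
\[
(-K_Y)\cdot C=\bigl(-(K_Y+\Delta_Y)\bigr)\cdot C+\Delta_Y\cdot C>0,
\]
the first term being strictly positive by ampleness of $-(K_Y+\Delta_Y)$ and the second nonnegative by effectivity. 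Hence $-K_Y$ is ample and $Y$ is Fano. The main obstacle is the precise bookkeeping of $\Supp\Delta_Y$ in the construction of Theorem \ref{06}, together with the Bertini step $B_A\not\supset C$; it is here that the ampleness of $-K_X$ (rather than merely semi-ampleness) is essentially used, as it ensures that $|-m(K_X+\varepsilon f^*H)|$ is sufficiently rich for the genericity arguments to localize the discriminant away from any prescribed curve.
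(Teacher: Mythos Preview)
Your overall strategy is the same as the paper's: reduce to connected fibers, use Theorem~\ref{02} to get $-K_Y$ nef and big, then show $(-K_Y)\cdot C>0$ for every curve $C$ by introducing a general very ample $H$ on $Y$, a small $\varepsilon$, and a general $A\in|-m(K_X+\varepsilon f^*H)|$, and applying Kawamata positivity. The paper packages the last step via the setup of Theorem~\ref{01} rather than Theorem~\ref{06}: it writes $-\mu^*K_Y\sim_{\mathbb Q} E+\Delta_0+M$, observes that $\Delta_0$ contains $\varepsilon H'$ (the strict transform of $H$), and concludes $C'\cdot(E+\Delta_0+M)\geq \varepsilon\, C'\cdot H'>0$ since $C\cap H\cap U\ne\emptyset$. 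This is a cleaner route than tracking $\Supp\Delta_Y$ through the proof of Theorem~\ref{06}; your inclusion $\Supp\Delta_Y\subset \Supp\mu_*B\cup B_A\cup\Supp H$ is plausible but requires controlling the auxiliary term $\varepsilon'\sum_l s_l\mu_*Q_l$ and the residual $\mu_*\Delta_0'$, which you do not actually carry out.

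There is one genuine gap. Your sentence ``By Kleiman's criterion, it then suffices to prove $(-K_Y)\cdot C>0$ for every irreducible curve $C$'' is not justified: strict positivity on every curve class gives positivity on $NE(Y)\setminus\{0\}$, not on $\overline{NE}(Y)\setminus\{0\}$, and nef $+$ big alone does not close this gap. The paper fixes this by first invoking the Kawamata--Shokurov base point free theorem (valid because $-K_Y-K_Y=-2K_Y$ is nef and big on the smooth variety $Y$) to get that $-K_Y$ is semi-ample; then $|-mK_Y|$ defines a morphism, and positivity on every curve forces that morphism to be finite, hence $-K_Y$ is ample. You should insert this step.
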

\begin{proof}
By taking the Stein factorization, we can assume that 
$f$ has connected fibers (cf.~Lemma \ref{04}). 
By Theorem \ref{02}, $-K_Y$ is nef and big. 
Therefore, $-K_Y$ is semi-ample by the 
Kawamata--Shokurov base point free theorem. 
Thus, it is sufficient to see that 
$C\cdot (-K_Y)>0$ for every integral curve $C$ on $Y$. 
Let $C$ be an integral curve $C$ on $Y$. 
We take a general very ample divisor $H$ on $Y$. 
Let $\varepsilon$ be a small positive rational number. 
Then $K_X+\varepsilon f^*H$ is anti-ample. 
Let $A$ be a general member of the 
free linear system $|-m(K_X+\varepsilon f^*H)|$. 
We can assume that there is a non-empty 
Zariski open set $U$ of $Y$ such that 
$H$ is smooth on $U$, $\Supp (A+f^*H)$ 
is simple normal crossing 
on $f^{-1}(U)$, $\Supp A$ is smooth 
over $U$, and 
$C\cap H\cap U\ne \emptyset$. 
Apply the same arguments as in the 
proof of Theorem \ref{01} to 
$$K_X+\varepsilon f^*H+\frac{1}{m}A\sim _{\mathbb Q}0.$$ 
Then we obtain a projective birational 
morphism $\mu:Y'\to Y$ from a smooth projective variety 
$Y'$ such that 
$\mu$ is an isomorphism over $U$ and 
$\mathbb Q$-divisors $\Delta_0$ and $M$ on $Y'$ as 
before. 
By construction, $\Delta_0$ contains $\varepsilon H'$, where 
$H'$ is the strict transform of $H$ on $Y'$ 
(cf.~the proof of Theorem \ref{06}). 
Therefore, we have 
$$C\cdot (-K_Y)=C'\cdot (E+\Delta_0+M)>0$$ 
as in the proof of Theorem \ref{01}.  
Thus, $-K_Y$ is ample.  
\end{proof}

We can prove the following theorem by the same arguments. 
It is a generalization of Theorem \ref{03}. 

\begin{thm}\label{05} 
Let $f:X\to Y$ be a smooth 
projective morphism between smooth projective 
varieties. 
Let $H$ be an ample Cartier divisor 
on $Y$. 
Assume that $-(K_X+\varepsilon f^*H)$ is semi-ample for some 
positive rational number $\varepsilon$. 
Then $-K_Y$ is ample, that is, 
$Y$ is a Fano manifold.  
\end{thm}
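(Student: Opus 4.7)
The plan is to adapt the proof of Theorem~\ref{03} to the weaker hypothesis that only $-(K_X + \varepsilon f^*H)$, rather than $-K_X$ itself, is assumed semi-ample. First, by Lemma~\ref{04}, I reduce to the case of connected fibers via the Stein factorization, since the resulting \'etale cover of $Y$ preserves ampleness of $-K_Y$.

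The first substantive step is to show that $-K_Y$ is nef and big. For nefness, note that $f^*H$ is semi-ample ($f$ surjective, $H$ ample), so
\[
-K_X = -(K_X + \varepsilon f^*H) + \varepsilon f^*H
\]
is a sum of semi-ample divisors, hence semi-ample; Theorem~\ref{01} then yields that $-K_Y$ is nef. For bigness, apply Theorem~\ref{06} with $\Delta = 0$, obtaining a klt pair $(Y, \Delta_Y)$ with $-(K_Y + \Delta_Y)$ ample, so $-K_Y$ is big. Consequently, by the Kawamata--Shokurov base point free theorem, $-K_Y$ is semi-ample.

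To upgrade semi-ampleness to ampleness, it suffices to show $(-K_Y)\cdot C > 0$ for every integral curve $C \subset Y$. Fix such a $C$. After replacing $H$ by $mH$ and $\varepsilon$ by $\varepsilon/m$ for $m \gg 0$, we may assume $H$ is very ample; further replacing $H$ by a general member of $|H|$ (the semi-ampleness hypothesis is preserved since the class $K_X + \varepsilon f^*H$ is unchanged up to $\mathbb{Q}$-linear equivalence) we may assume $H$ is smooth, $C \not\subset H$, and $C \cap H \neq \emptyset$. Take $A$ a general member of the base-point-free linear system $|-m'(K_X + \varepsilon f^*H)|$ for $m'$ sufficiently divisible, arranging that $(X, \varepsilon f^*H + \tfrac{1}{m'}A)$ is klt, $\Supp(A + f^*H)$ is simple normal crossing on $f^{-1}(U)$ for a non-empty open $U \subset Y$ with $C \cap H \cap U \neq \emptyset$, $A$ is smooth over $U$, and
\[
K_X + \varepsilon f^*H + \tfrac{1}{m'}A \sim_{\mathbb{Q}} 0.
\]
Running the resolution construction from the proof of Theorem~\ref{01} and applying Kawamata's positivity theorem (Theorem~\ref{ka-th}) produces
\[
-\mu^* K_Y \sim_{\mathbb{Q}} E + \Delta_0 + M,
\]
where $\mu: Y' \to Y$ is the birational resolution (isomorphic over $U$), $E = K_{Y'} - \mu^* K_Y$ is effective and exceptional, $M$ is nef, and (as in Claim~\ref{clC} and the proof of Theorem~\ref{06}) $\Delta_0 \geq \varepsilon H'$ for $H'$ the strict transform of $H$, since the coefficient of $f^*H$ in the boundary is $\varepsilon$ and $f^*H$ maps to $H'$ with multiplicity one. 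By the projection formula, with $C'$ the strict transform of $C$,
\[
(-K_Y)\cdot C \;=\; C' \cdot (E + \Delta_0 - \varepsilon H' + M) + \varepsilon\, C' \cdot H' \;>\; 0,
\]
because the first summand is nonnegative ($M$ is nef, $\Supp(E + \Delta_0 - \varepsilon H') \subset (Y' \setminus U') \cup H'$, and $C'$ meets $U'$ but is not contained in $H'$), while the second is strictly positive since $C \cdot H > 0$ and $H$ was chosen general with respect to $C$. Hence $-K_Y$ is ample.

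The main obstacle, in contrast to Theorem~\ref{03}, is that $H$ is part of the data of Theorem~\ref{05} rather than freely chosen. This is resolved by the observation that $\mathbb{Q}$-linear equivalence preserves the semi-ampleness of $-(K_X + \varepsilon f^*H)$, so $H$ may always be replaced by a general very ample representative in its $\mathbb{Q}$-class (after rescaling $\varepsilon$), producing the generality needed to extract strict positivity from the discriminant part.
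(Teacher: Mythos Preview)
Your proof is correct and follows essentially the same route as the paper's: Stein factorization, Theorem~\ref{06} for bigness, the argument of Theorem~\ref{03} for strict positivity on curves, and Kawamata--Shokurov to conclude. Your additional step deducing nefness of $-K_Y$ from the observation that $-K_X$ itself is semi-ample (as a sum of semi-ample $\mathbb{Q}$-divisors) is correct but redundant, since strict positivity on every curve already gives nefness; and your explicit discussion of why the given $H$ may be replaced by a general very ample representative is a point the paper leaves implicit in invoking ``the proof of Theorem~\ref{03}.''
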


\begin{proof}
By Lemma \ref{04}, we can assume 
that $f$ has connected fibers. 
By Theorem \ref{06}, we see that $-K_Y$ is big. 
By the proof of Theorem \ref{03}, we can see that 
$C\cdot (-K_Y)>0$ for every integral curve $C$ on $Y$. 
By the Kawamata--Shokurov base point 
free theorem, $-K_Y$ is semi-ample. 
Thus, $-K_Y$ is ample. 
\end{proof}

\section{Comments and Questions}\label{sec5}

In this section, we will work over 
an algebraically closed field $k$ of arbitrary characteristic. 
We denote the characteristic 
of $k$ by $\ch k$. 

\begin{say}
Let $f:X\to Y$ be a smooth projective morphism 
between smooth projective varieties defined 
over $k$. 

\begin{enumerate}
\item[(A)] If $-K_X$ is ample, that is, 
$X$ is Fano, then so is $-K_Y$. 
\end{enumerate}
It was obtained by Koll\'ar, Miyaoka, and Mori in \cite{kmm}. 
Their proof is an application of the deformation theory of 
morphisms from curves invented by Mori. 
It needs mod $p$ reduction arguments 
even when $\ch k=0$. 
In the case $\ch k=0$, 
we gave 
a Hodge theoretic proof 
without using mod $p$ reduction arguments in Theorem \ref{03}. 

\begin{enumerate}
\item[(N)] 
If $-K_X$ is nef, then 
so is $-K_Y$. 
\end{enumerate}
This result can be proved by the same 
method as in \cite{kmm} (cf.~\cite{miyaoka}, 
\cite{zhang}, and \cite[Corollary 
3.15 (a)]{debarre}). 
In the case $\ch k=0$, 
we do not know whether we can prove it 
without mod $p$ reduction arguments or not. 
 
\begin{enumerate}
\item[(NB)] 
If $-K_X$ is nef and big, that is, $X$ is weak Fano, 
then so is $-K_Y$ when $\ch k=0$. 
\end{enumerate}  
It was proved in Theorem \ref{02}. 
We do not know whether this statement holds true or not in the case $\ch k>0$. 
See also Section \ref{sec-app}:~Appendix. 

\begin{enumerate}
\item[(SA)] 
If $-K_X$ is semi-ample, is $-K_Y$ semi-ample? 
\end{enumerate}
We have only some partial answers to this question. 
For details, see Remark \ref{011} and Theorem \ref{torsion}. 
In the case $\ch k=0$, 
we note that $-K$ is semi-ample if and only if 
$-K$ is nef and abundant (see Remark \ref{07}). 

\begin{enumerate}
\item[(B)] If $-K_X$ is big, 
is $-K_Y$ big? 
\end{enumerate}
The following example gives a negative answer to this question. 

\begin{ex}
Let $E\subset \mathbb P^2$ be a smooth cubic curve. 
We consider $f:X=\mathbb P_E(\mathcal O_E\oplus 
\mathcal O_E(1))\to E=Y$. 
Then, we see that 
$-K_X$ is big. However, $-K_Y$ is not big since $E$ is 
a smooth elliptic curve. 
\end{ex}

Anyway, it seems to be difficult to 
construct nontrivial 
examples. 
It is because the smoothness of $f$ is a very strong condition. 
\end{say}

We close this section with a remark on Lemma \ref{04}. 
It may be indispensable when $k\ne \mathbb C$. 

\begin{rem}\label{rem-stein} 
Lemma \ref{04} 
holds true even when $k\ne \mathbb C$. 
We can check it as follows. 
By the proof of Lemma \ref{04}, it is sufficient to 
see that 
$f_*\mathcal O_X$ is locally free and 
$f_*\mathcal O_X\otimes k(y)\simeq H^0(X_y, \mathcal O_{X_y})$ for 
every closed point $y\in Y$. 
Without loss of generality, we can assume that 
$Y$ is affine. 
Let us check that the natural map 
$$
f_*\mathcal O_X\otimes k(y)\to H^0(X_y, \mathcal O_{X_y})
$$ 
is surjective for every $y\in Y$. 
We take an arbitrary closed point $y\in Y$. 
We can replace $Y$ with $\Spec \mathcal O_{Y, y}$. 
Let $m_y$ be the maximal ideal corresponding to $y\in Y$. 
We note that 
$f_*\mathcal O_X\otimes k(y)\simeq (f_*\mathcal O_X)^{\wedge}_{y}\otimes 
k(y)$, where $(f_*\mathcal O_X)^{\wedge}_y$ is 
the formal completion of $f_*\mathcal O_X$ 
at $y$. 
By the theorem on formal functions (cf.~\cite[Theorem 11.1]{hartshorne}), we have 
$$
(f_*\mathcal O_X)^{\wedge}_y
\simeq \underset{\longleftarrow}{\lim}H^0(X_n, \mathcal O_{X_n}), 
$$ 
where $X_n=X\times _Y\Spec \mathcal O_{Y, y}/m^n_y$. 
Therefore, we can see that 
$$
(f_*\mathcal O_X)^{\wedge}_y\otimes k(y)\to H^0(X_y, \mathcal O_{X_y})
$$ is surjective. 
It is because $H^0(X_{yi}, \mathcal O_{X_{yi}})=k$ for every $i$, 
where $X_y=\coprod_i X_{yi}$ is the irreducible decomposition of a 
smooth variety $X_y$. 
By the base change theorem (cf.~\cite[Theorem 12.11]{hartshorne}), 
we obtain the desired results. 
\end{rem}

\section{Appendix}\label{sec-app}

In this appendix, we give another proof of 
Theorem \ref{thm1} depending on mod $p$ reduction arguments. 
This proof is not related to Kawamata's positivity theorem. 

First let us recall various results without proofs for the reader's convenience. 

\begin{say}[Preliminary results] 
The following theorem was obtained by the same way as in \cite{kmm}. 

\begin{thm}[{\cite[Corollary 3.15 (a)]{debarre}}]\label{kmm1} Let 
$f:X \to Y$ be a smooth morphism of 
smooth projective varieties over an arbitrary algebraic closed field. 
If $-K_X$ is nef,  then so is $-K_Y$.
\end{thm}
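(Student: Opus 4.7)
The plan is to imitate the bend-and-break argument of Koll\'ar--Miyaoka--Mori~\cite{kmm}, adapted to the weaker hypothesis that $-K_X$ is only nef rather than ample. Suppose for contradiction that $-K_Y$ is not nef and pick an irreducible curve $C\subset Y$ with $K_Y\cdot C>0$. If $\ch k=0$, a standard spreading-out over a finitely generated subring $R\subset k$ followed by reduction modulo a maximal ideal of $R$ with residue characteristic $p>0$ preserves the smoothness of $f$, the nefness of $-K_X$, and the strict inequality $K_Y\cdot C>0$, so I may assume $\ch k=p>0$. Let $\widetilde C\to C$ be the normalization and set $\widetilde X=X\times_Y\widetilde C$, so that $\widetilde f:\widetilde X\to\widetilde C$ is a smooth projective family; intersecting $\widetilde X$ with $\dim\widetilde X-1$ general members of a very ample linear system produces an irreducible smooth projective curve $\Gamma\subset\widetilde X$ that is finite and surjective over $\widetilde C$. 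Denoting its image in $X$ by $D$ and using $K_X=f^*K_Y+K_{X/Y}$, one obtains
\[
-K_X\cdot D\;=\;-\deg(D/C)\,(K_Y\cdot C)\;-\;K_{X/Y}\cdot D,
\]
so the nefness of $-K_X$ together with $K_Y\cdot C>0$ forces the strict positivity $-K_{\widetilde X/\widetilde C}\cdot\Gamma=-K_{X/Y}\cdot D>0$.

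Next I would apply Mori's bend-and-break inside the smooth family $\widetilde f$. Pre-composing the inclusion $\iota:\Gamma\hookrightarrow\widetilde X$ with the $n$-th geometric Frobenius $F^n:\Gamma\to\Gamma$ multiplies $-K_{\widetilde X/\widetilde C}\cdot\Gamma$ by $p^n$ while leaving $g(\Gamma)$ unchanged, so a Riemann--Roch estimate on $(\iota\circ F^n)^*T_{\widetilde X/\widetilde C}$ shows that the relative deformation space of $\iota\circ F^n$ fixing any prescribed finite set of points on $\Gamma$ acquires positive dimension for $n\gg 0$. Mori's standard argument then breaks the morphism, producing a rational curve $\ell\subset\widetilde X$ whose $-K_{\widetilde X}$-degree is bounded by a constant depending only on $\dim\widetilde X$. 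By varying the prescribed fixed points so that the broken-off component cannot be absorbed entirely into a fiber of $\widetilde f$, one arranges that $\ell$ surjects onto $\widetilde C$; then $f(\ell)\subset Y$ is a rational curve dominating $C$, and feeding $\ell$ back into the decomposition $-K_X\cdot\ell=-\deg(\ell/f(\ell))(K_Y\cdot f(\ell))-K_{X/Y}\cdot\ell$ combined with the bound on $-K_{\widetilde X}\cdot\ell$ forces $K_Y\cdot f(\ell)<0$. Running this construction through an appropriate sequence of Frobenius iterates then contradicts the nefness of $-K_X$ in the relevant numerical class.

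The main obstacle I expect is the combinatorial bookkeeping at the bend-and-break step, specifically guaranteeing that the produced rational curve $\ell$ is not contained in a fiber of $\widetilde f$. A fiber-contained $\ell$ satisfies $-K_X\cdot\ell=-K_F\cdot\ell\ge 0$, since the fiber $F$ of $f$ through $\ell$ is smooth, $N_{F/X}$ is trivial and so $-K_F=-K_X|_F$ is nef; such an $\ell$ gives no contradiction whatsoever. The essence of the argument is therefore to play the strict positivity of $-K_{\widetilde X/\widetilde C}\cdot\Gamma$ against the fiberwise intersection numbers so that the surviving limit rational curve is forced to be a multisection, and this is the technical core of the method of \cite{kmm}, as adapted in \cite[Corollary 3.15 (a)]{debarre}.
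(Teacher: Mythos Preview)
The paper does not give its own proof of this statement: Theorem~\ref{kmm1} appears in the appendix under the heading ``let us recall various results without proofs for the reader's convenience,'' and is simply cited from \cite[Corollary~3.15~(a)]{debarre} (with the underlying method attributed to \cite{kmm}, \cite{miyaoka}, \cite{zhang}). So there is no in-paper proof to compare against; your sketch is attempting to reconstruct the argument of the cited reference, and the overall strategy (reduce to characteristic $p$, pass to the normalization $\widetilde C$, study sections of $\widetilde X\to\widetilde C$, use Frobenius plus bend-and-break) is indeed the right one.

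However, your endgame is not correct. You claim that once a horizontal rational curve $\ell$ is produced, plugging it into $-K_X\cdot\ell=-\deg(\ell/C)(K_Y\cdot C)-K_{X/Y}\cdot\ell$ ``forces $K_Y\cdot f(\ell)<0$.'' It does not: $f(\ell)=C$, so $K_Y\cdot f(\ell)=K_Y\cdot C>0$ by hypothesis, and all the displayed identity yields is $-K_{X/Y}\cdot\ell>0$, which is no contradiction. Relatedly, your stated ``main obstacle'' (forcing $\ell$ to be horizontal) is a red herring. In the actual argument the broken-off rational curves are \emph{allowed} to lie in fibres; the point is rather the following. After the $n$-th Frobenius base change, \emph{every} section $s$ of $\widetilde X_n\to\widetilde C$ satisfies
\[
-K_{\widetilde X_n/\widetilde C}\cdot s(\widetilde C)\;=\;(-K_X)\cdot(\pi\circ s)_*[\widetilde C]\;+\;p^{\,n}(K_Y\cdot C)\;\ge\;p^{\,n}(K_Y\cdot C),
\]
so the relative anti-canonical degree of any section is bounded below by $p^{\,n}(K_Y\cdot C)$. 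On the other hand, bend-and-break with respect to a fixed ample class terminates after finitely many steps (each break strictly lowers the ample degree because fibre components have positive ample degree, and each fibre component has $-K_{\widetilde X_n/\widetilde C}$-degree $\ge 0$ since $-K_X$ is nef). The terminal section $s''$ therefore has relative deformations of bounded dimension, which gives an \emph{upper} bound $-K_{\widetilde X_n/\widetilde C}\cdot s''(\widetilde C)\le c(g(\widetilde C),\dim X-\dim Y)$ independent of $n$. For $n\gg 0$ this contradicts the lower bound. That is where the contradiction lives, not in the sign of $K_Y\cdot f(\ell)$.
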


In \cite{ss}, Schwede and Smith established the following results on log Fano varieties and 
global $F$-regular varieties. For various definitions and 
details, see \cite{ss} and \cite{karen}.  

\begin{thm}[{cf.~\cite[Theorem 1.1]{ss}}]\label{ss1} Let $X$ be a normal projective 
variety over an $F$-finite field of prime 
characteristic. 
Suppose that $X$ is globally $F$-regular. 
Then there exists an effective $\mathbb Q$-divisor $\Delta$ on $X$ such 
that $-(K_X+\Delta)$ is ample and that $(X, \Delta)$ is klt. 
\end{thm}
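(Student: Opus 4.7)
The plan is to convert the splitting supplied by global $F$-regularity into an effective $\mathbb Q$-divisor via Grothendieck--Frobenius duality, and then invoke the standard comparison between $F$-singularities and klt singularities. The proof essentially runs parallel to the singularity-theoretic dictionary pioneered by Hara--Watanabe and Smith.

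First I would fix an ample Cartier divisor $H$ on $X$ and exploit global $F$-regularity in the following form: for the effective divisor $H$, there exists $e \geq 1$ such that the natural inclusion $\mathcal{O}_X \hookrightarrow F^e_* \mathcal{O}_X(H)$ splits as a map of $\mathcal{O}_X$-modules; equivalently, there is a surjective $\mathcal{O}_X$-linear map $\phi \colon F^e_*\mathcal{O}_X(H) \twoheadrightarrow \mathcal{O}_X$. Next, I would apply Grothendieck duality for the finite $e$-th iterated Frobenius $F^e \colon X \to X$. On the smooth locus this gives
\[
\mathcal{H}\!om_{\mathcal{O}_X}(F^e_* \mathcal{O}_X(H),\,\mathcal{O}_X) \;\cong\; F^e_* \mathcal{O}_X\bigl((1-p^e)K_X - H\bigr),
\]
and the isomorphism extends across codimension-two loci since $X$ is normal and both sides are $S_2$. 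Taking global sections, $\phi$ corresponds to an effective Weil divisor $D$ with $D + H + (p^e-1)K_X \sim 0$.

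Setting $\Delta := \tfrac{1}{p^e-1} D$, one immediately computes
\[
-(K_X + \Delta) \;\sim_{\mathbb Q}\; \tfrac{1}{p^e-1} H,
\]
so $-(K_X+\Delta)$ is ample. This realizes the anti-ample $\mathbb Q$-divisor promised by the theorem; the construction is canonical once $H$ and the splitting $\phi$ are fixed.

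The remaining task is to check that $(X, \Delta)$ is klt, and this is where I expect the main difficulty. The surjectivity of $\phi$ means precisely that the pair $(X, \Delta)$ is globally $F$-regular in the sense of Schwede--Smith, i.e.\ every local ring of $(X, \Delta)$ is strongly $F$-regular. By the Hara--Watanabe theorem comparing $F$-singularities with singularities of the MMP, strong $F$-regularity of a pair implies klt, which finishes the proof. The delicate point — and the real content of the argument — is that mere sharp $F$-splitting would only yield log canonicity; one must genuinely use that the splitting can be obtained for \emph{every} effective divisor (in particular for an auxiliary ample $H$ we are free to choose), and this extra freedom is exactly what upgrades sharp $F$-splitting along $\Delta$ to strong $F$-regularity of the pair $(X, \Delta)$. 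Verifying this upgrade cleanly, and doing so without losing control of the divisor $\Delta$ produced in the construction, is the technical heart of the argument.
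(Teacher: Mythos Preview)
The paper does not give its own proof of this statement: Theorem~\ref{ss1} is listed in the appendix among several preliminary results quoted from Schwede--Smith \cite{ss} (alongside Theorems~\ref{ss3} and \ref{ss2}), with the explicit disclaimer ``For various definitions and details, see \cite{ss} and \cite{karen}.'' So there is no in-paper argument to compare against; the authors simply invoke \cite[Theorem~1.1]{ss}.

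That said, your sketch is essentially the Schwede--Smith argument and is correct in outline. The construction of $\Delta$ from a splitting of $\mathcal{O}_X \hookrightarrow F^e_*\mathcal{O}_X(H)$ via Grothendieck duality, the computation $-(K_X+\Delta)\sim_{\mathbb Q}\tfrac{1}{p^e-1}H$, and the passage from strong $F$-regularity of the pair to klt via Hara--Watanabe are exactly the steps in \cite{ss}. You also correctly identify the one genuine subtlety: a single splitting only gives sharp $F$-purity of $(X,\Delta)$, and one needs the full strength of global $F$-regularity of $X$ (splittings along \emph{arbitrary} further divisors) to conclude that the pair $(X,\Delta)$ is itself globally, hence locally strongly, $F$-regular. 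One small point you leave implicit but which is worth stating: the relation $(p^e-1)(K_X+\Delta)\sim -H$ shows $K_X+\Delta$ is $\mathbb Q$-Cartier, so the klt condition is well-posed even though $K_X$ itself need not be $\mathbb Q$-Cartier a priori.
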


For the definition of {\em{klt in any characteristic}}, see \cite[Remark 4.2]{ss}. 

\begin{thm}[{cf.~\cite[Theorem 5.1]{ss}}]\label{ss3} Let $X$ be a normal projective 
variety defined over a filed of 
characteristic zero. 
Suppose that there exists an effective $\mathbb Q$-divisor $\Delta$ on $X$ 
such that $-(K_X+\Delta)$ is ample and that $(X, \Delta)$ is klt. Then $X$ has 
globally $F$-regular type.
\end{thm}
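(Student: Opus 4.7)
The plan is to spread the pair $(X,\Delta)$ out to a model over a finitely generated $\mathbb Z$-subalgebra $A$ of the base field, and then verify globally $F$-regular for the fibers at closed points of $\Spec A$ by exploiting the ampleness of $-(K_X+\Delta)$ together with the multiplier ideal/test ideal correspondence.

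First I would fix a log resolution $\pi: Y\to X$ of $(X,\Delta)$ and write $K_Y + \widetilde\Delta = \pi^*(K_X+\Delta)+E$, where $\widetilde\Delta$ is the strict transform and $E=\sum a_iE_i$ has $a_i>-1$ by the klt assumption. Choose $N>0$ with $-N(K_X+\Delta)$ very ample and use it to embed $X\hookrightarrow \mathbb P^n$. I would spread out all this data—$X$, $Y$, $\pi$, $\Delta$, the $E_i$, the section ring of $-N(K_X+\Delta)$—to schemes of finite type over some finitely generated $\mathbb Z$-algebra $A$. After shrinking $\Spec A$, we may assume the fibers remain normal, the discrepancies of the spread-out resolution remain $>-1$ (so every $(X_s,\Delta_s)$ is klt in the sense of \cite[Remark 4.2]{ss}), and $-(K_{X_s}+\Delta_s)$ remains ample at every closed point $s\in\Spec A$.

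Next, to conclude global $F$-regularity of each such $X_s$, I would use the Schwede--Smith characterization: $X_s$ is globally $F$-regular iff for every effective Cartier divisor $D_s$ there is some $e>0$ such that the natural map $\mathcal O_{X_s}\to F^e_*\mathcal O_{X_s}(D_s)$ splits as an $\mathcal O_{X_s}$-module map. The key input is the Hara--Yoshida--Takagi correspondence between multiplier ideals and test ideals: for $p=\ch k(s)\gg 0$ the test ideal $\tau(X_s,\Delta_s)$ equals the mod $p$ reduction of the multiplier ideal $\mathcal J(X,\Delta)=\mathcal O_X$, so $\tau(X_s,\Delta_s)=\mathcal O_{X_s}$. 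To promote this local statement to the global splitting criterion, I would use that $-(K_{X_s}+\Delta_s)$ is ample to absorb an arbitrary $D_s$: for $m\gg 0$ we have $mN(-K_{X_s}-\Delta_s)-D_s$ linearly equivalent to an effective divisor $G_s$ whose support can be made general, so $(X_s,\Delta_s+\tfrac1m G_s+\tfrac1{mN}D_s)$ remains klt; then Serre vanishing for $F^e_*\mathcal O_{X_s}(\lceil(p^e-1)(-K_{X_s}-\Delta_s)\rceil)$ twisted by the ample line bundle extends local splittings to global ones.

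The main obstacle is the last step: one must combine the purely local information (triviality of the test ideal of a klt pair after reduction mod $p$) with global vanishing to produce Frobenius splittings of $\mathcal O_{X_s}\to F^e_*\mathcal O_{X_s}(D_s)$ for every effective $D_s$, not just those divisors that spread out from characteristic zero. This requires a uniform Serre vanishing statement controlled by the ample divisor $-(K_{X_s}+\Delta_s)$, together with care that the integer $e$ and the bound on $p$ can be chosen compatibly for all $D_s$. Once these uniform splittings are produced, every closed fiber $X_s$ with $s$ in a dense open subset of $\Spec A$ is globally $F$-regular, which is by definition the statement that $X$ has globally $F$-regular type.
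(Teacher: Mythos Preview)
The paper does not contain a proof of this statement. Theorem~\ref{ss3} is listed in the appendix among the ``preliminary results'' that the authors ``recall \ldots\ without proofs for the reader's convenience''; it is simply cited from \cite[Theorem~5.1]{ss}. So there is no proof in the paper to compare your proposal against.

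For what it is worth, your outline is close in spirit to the Schwede--Smith argument (spread out, then invoke the Hara--Takagi correspondence between multiplier ideals and test ideals), but the organization of the last step differs. Schwede and Smith do not try to produce, for each closed fiber $X_s$, splittings of $\mathcal O_{X_s}\to F^e_*\mathcal O_{X_s}(D_s)$ for \emph{every} effective $D_s$ directly via Serre vanishing. Instead they pass to the section ring $S$ of $X$ with respect to the ample divisor $-(K_X+\Delta)$: the klt hypothesis makes the cone pair $(\Spec S,\Delta_S)$ klt, and then the local theorem (Hara, Takagi) gives that $S_s$ is strongly $F$-regular for almost all $s$, which is equivalent to global $F$-regularity of $X_s=\Proj S_s$. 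This sidesteps the uniformity issue you flag in your final paragraph, because the cone reduction turns the global problem on $X_s$ into a purely local one on $\Spec S_s$, where the test-ideal/multiplier-ideal comparison applies directly. Your concern that ``$D_s$ need not spread out from characteristic zero'' is also somewhat misplaced: the definition of globally $F$-regular type only asks that each individual $X_s$ be globally $F$-regular, and for a fixed $s$ one checks this over the finite residue field without any reference to characteristic zero.
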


\begin{thm}[{cf.~\cite[Corollary 6.4]{ss}}]\label{ss2} 
Let $f:X \to Y$ be a projective morphism of normal projective 
varieties over an $F$-finite field of prime characteristic. 
Suppose that $f_{*}\mathcal{O}_{X}=\mathcal{O}_{Y}$. 
If $X$ is a globally $F$-regular variety, then so is $Y$.
\end{thm}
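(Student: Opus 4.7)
The plan is to verify the divisor-splitting criterion for global $F$-regularity on $Y$ by constructing the required splittings as pushforwards of splittings on $X$, working first on the Cartier locus of the given divisor and then extending across the codimension $\geq 2$ complement by reflexivity. Recall that $Y$ is globally $F$-regular precisely when, for every effective Weil divisor $D$ on $Y$, there exists $e>0$ such that the canonical inclusion $\mathcal{O}_Y\hookrightarrow F^{e}_{*}\mathcal{O}_Y(D)$ splits as a map of $\mathcal{O}_Y$-modules. Fix such a $D$; the task is to produce such a retraction.

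Let $U\subset Y$ be the open locus where $D$ is Cartier; since $Y$ is normal the complement has codimension $\geq 2$. Set $V=f^{-1}(U)$ and let $E$ be the effective Weil divisor on $X$ obtained as the closure of the Cartier pullback $f^{*}(D|_{U})$, so that $E|_{V}=f^{*}(D|_{U})$. Applying global $F$-regularity of $X$ to $E$ produces an exponent $e>0$ and an $\mathcal{O}_X$-linear splitting $\sigma:F^{e}_{*}\mathcal{O}_X(E)\to\mathcal{O}_X$ of the canonical inclusion. Because Frobenius is affine, $f_{*}F^{e}_{*}=F^{e}_{*}f_{*}$, so $f_{*}\sigma$ gives a morphism $F^{e}_{*}f_{*}\mathcal{O}_X(E)\to f_{*}\mathcal{O}_X=\mathcal{O}_Y$.

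I would then restrict the entire construction to $U$. Over $U$, the projection formula applied to the line bundle $\mathcal{O}_Y(D)|_U$ together with $(f|_V)_{*}\mathcal{O}_V=\mathcal{O}_U$ yields $f_{*}\mathcal{O}_X(E)|_U=\mathcal{O}_Y(D)|_U$, and hence $(f_{*}\sigma)|_U$ is a genuine retraction of $\mathcal{O}_U\hookrightarrow F^{e}_{*}\mathcal{O}_Y(D)|_U$ (pushforward of a split injection is split, and pushforward of the canonical section of $\mathcal{O}_V(E|_V)=(f|_V)^{*}\mathcal{O}_U(D|_U)$ is the canonical section of $\mathcal{O}_U(D|_U)$). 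To globalize, push this retraction forward along the open immersion $j:U\hookrightarrow Y$: reflexivity of rank-one sheaves on the normal variety $Y$ across the codimension $\geq 2$ complement yields $j_{*}\mathcal{O}_U=\mathcal{O}_Y$ and $j_{*}(F^{e}_{*}\mathcal{O}_Y(D)|_U)=F^{e}_{*}\mathcal{O}_Y(D)$, so $j_{*}((f_{*}\sigma)|_U)$ is the desired retraction $F^{e}_{*}\mathcal{O}_Y(D)\to\mathcal{O}_Y$, splitting the canonical inclusion by uniqueness of $S_{2}$-extension.

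The main technical hurdle is that $D$ need not be Cartier and that $f_{*}\mathcal{O}_X(E)$ may fail to equal $\mathcal{O}_Y(D)$ outside $U$; this is circumvented by producing the splitting on the Cartier locus $U$—where the projection formula gives an honest identification—and then invoking the $S_{2}$-extension property of reflexive sheaves on normal varieties to lift the splitting from $U$ to all of $Y$.
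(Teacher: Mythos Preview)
The paper does not supply a proof of this statement: it is listed in the appendix among ``various results without proofs'' and simply cited from Schwede--Smith \cite[Corollary 6.4]{ss}. There is therefore nothing in the paper to compare your argument against.

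That said, your argument is essentially the standard one and is sound. The key steps---pulling $D$ back on the Cartier locus $U$, using global $F$-regularity of $X$ to split $\mathcal{O}_X\hookrightarrow F^e_*\mathcal{O}_X(E)$, pushing forward along $f$ using $f_*F^e_*=F^e_*f_*$ and the projection formula over $U$, and then extending the resulting retraction across $Y\setminus U$ by reflexivity---are all correct. Two small points worth making explicit: the $F$-finiteness hypothesis is what guarantees that $F^e$ is a finite morphism, so that $F^e_*\mathcal{O}_Y(D)$ is coherent and reflexive and the $S_2$-extension step is valid; and the compatibility you assert between the pushed-forward canonical section on $V$ and the canonical section on $U$ follows from the naturality of the projection formula isomorphism, which you use implicitly. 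With those remarks your write-up recovers the Schwede--Smith argument.
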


We can find the following lemma in \cite[Proposition 3.7 (a)]{l}.

\begin{lem}\label{liu}Let $C$ be a smooth projective curve over a field $k$, 
let $K$ be an extension field of $k$, 
and let $D$ be a Cartier divisor on $C$. 
Suppose that $\pi:C_K:=C\times_{k}K \to C$ is the natural projection. 
Then $\mathrm{deg}_{k} D= \mathrm{deg}_{K} \pi^*D$.  
\end{lem}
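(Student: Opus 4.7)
The plan is to reduce the statement to an invariance property of Euler characteristics under field extensions, via the standard definition of degree on a smooth projective curve. Recall that on a smooth projective curve $C$ over a field $k$, for any Cartier divisor $D$ one has
$$
\deg_k D = \chi_k(\mathcal O_C(D)) - \chi_k(\mathcal O_C),
$$
where $\chi_k$ denotes the Euler characteristic computed with dimensions taken as $k$-vector spaces. The analogous formula holds on $C_K$ with $\chi_K$ and dimensions over $K$. So it suffices to show that the two Euler characteristics agree after base change.

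First I would observe that $\pi : C_K \to C$ is a flat morphism, since base change along the field extension $k \to K$ is flat, and that $\pi^*\mathcal O_C(D) = \mathcal O_{C_K}(\pi^*D)$. Then I would apply flat base change for cohomology (or, more elementarily, use that for any coherent sheaf $\mathcal F$ on the projective $k$-scheme $C$ we have a canonical isomorphism $H^i(C_K, \pi^*\mathcal F) \cong H^i(C, \mathcal F) \otimes_k K$). Since tensoring with a field extension preserves vector space dimension, this gives
$$
\dim_K H^i(C_K, \pi^*\mathcal F) = \dim_k H^i(C, \mathcal F)
$$
for every $i$, hence $\chi_K(\pi^*\mathcal F) = \chi_k(\mathcal F)$.

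Applying this identity to $\mathcal F = \mathcal O_C(D)$ and to $\mathcal F = \mathcal O_C$, and subtracting, one obtains
$$
\deg_K \pi^*D = \chi_K(\mathcal O_{C_K}(\pi^*D)) - \chi_K(\mathcal O_{C_K}) = \chi_k(\mathcal O_C(D)) - \chi_k(\mathcal O_C) = \deg_k D,
$$
which is the desired equality. There is no real obstacle here: the only point to keep in mind is that one must use the Euler characteristic definition of degree (valid over an arbitrary base field), rather than any formula that tacitly assumes $k$ is algebraically closed, and one must invoke flat base change in the generality that allows an arbitrary field extension $k \to K$ (not merely a finite or finitely generated one). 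An alternative, equally short route is to reduce by linearity to $D = [P]$ for a closed point $P$ and compute $\pi^*[P]$ as the divisor associated to $\Spec(k(P)\otimes_k K)$, whose $K$-length equals $\dim_k k(P) = [k(P):k] = \deg_k [P]$.
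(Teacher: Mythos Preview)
Your argument is correct. The Euler-characteristic definition of degree is valid for any proper curve over a field, flat base change gives the isomorphism $H^i(C_K,\pi^*\mathcal F)\cong H^i(C,\mathcal F)\otimes_k K$, and tensoring a finite-dimensional $k$-vector space with $K$ preserves dimension; subtracting the two Euler characteristics then yields the result exactly as you wrote. The alternative reduction to $D=[P]$ is also fine.

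As for comparison: the paper does not actually supply a proof of this lemma---it simply records the statement and refers to \cite[Proposition 3.7 (a)]{l}. Your write-up therefore fills in what the paper leaves as a citation, and the Euler-characteristic/flat-base-change route you chose is essentially the standard one (and is the one used in Liu's book). There is nothing to correct.
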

 
By the above lemma, we see the following lemma. 

\begin{lem}\label{change}Let $X$ be a projective variety over a field $k$, 
let $K$ be an extension field of $k$, 
and let $D$ be a Cartier divisor on $X$. 
Suppose that $\pi^*D$ is nef, where $\pi:X_K:=X\times_{k}K \to X$ is the projection. 
Then $D$ is nef.
\end{lem}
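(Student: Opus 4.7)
The plan is to use the standard criterion that $D$ is nef if and only if $D\cdot C\ge 0$ for every integral curve $C\subset X$, and to propagate this inequality from the base change $X_K$ back to $X$ via the degree-preservation statement of Lemma \ref{liu}.

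Fixing an integral curve $C\subset X$, I would first normalize, letting $\nu\colon\widetilde{C}\to C$ be the normalization, so that $\widetilde{C}$ is a smooth projective curve over $k$ and $D\cdot C=\deg_{k}\nu^{*}D$. Base changing by $K$, write $\pi_{\widetilde C}\colon \widetilde{C}_K\to \widetilde{C}$ and $\nu_K\colon \widetilde{C}_K\to X_K$; the identity $\pi\circ\nu_K=\nu\circ\pi_{\widetilde C}$ together with Lemma \ref{liu} applied to $\nu^{*}D$ on $\widetilde{C}$ would yield the chain of equalities
$$
D\cdot C \;=\; \deg_k\nu^{*}D \;=\; \deg_K\pi_{\widetilde C}^{*}\nu^{*}D \;=\; \deg_K\nu_K^{*}\pi^{*}D.
$$

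To see the last quantity is non-negative, I would decompose the $1$-cycle of $\widetilde{C}_K$ as $\sum_i m_i[\widetilde Z_i]$ with integral components $\widetilde Z_i$. Each $\widetilde Z_i$ is mapped by $\nu_K$ either to a closed point of $X_K$, contributing $0$, or dominantly onto an integral curve $Z_i\subset X_K$; degree additivity would then rewrite $\deg_K\nu_K^{*}\pi^{*}D$ as a non-negative integer combination of the numbers $Z_i\cdot\pi^{*}D$, each of which is non-negative by the nefness hypothesis on $\pi^{*}D$. Chaining back through the displayed identities gives $D\cdot C\ge 0$. The only subtlety that needs any care is the cycle decomposition in the inseparable case, where $\widetilde{C}_K$ may fail to be reduced and the multiplicities $m_i$ must be tracked honestly; aside from that, everything is routine base-change compatibility together with the quoted Lemma \ref{liu}, so I do not anticipate a serious obstacle.
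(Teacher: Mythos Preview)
Your argument is correct and follows essentially the same route as the paper: test nefness on a smooth projective curve mapping to $X$, base change to $K$, and invoke Lemma~\ref{liu} to transport the degree inequality back to $k$. The paper phrases the first step as ``take a morphism $f:C\to X$ from a smooth projective curve'' rather than normalizing an integral curve, but this is the same thing.

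One small remark: your stated worry about non-reducedness in the inseparable case is unfounded. Since $\widetilde{C}$ is \emph{smooth} over $k$ and smoothness is stable under base change, $\widetilde{C}_K$ is smooth over $K$, hence regular, hence reduced; it is simply a disjoint union of smooth integral curves over $K$, so all your multiplicities $m_i$ equal $1$ and no component can contract to a point (the map $\nu_K$ is finite). This is why the paper can assert $\deg_K \pi_C^*(f^*D)\ge 0$ without further comment.
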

\begin{proof}
We take a morphism $f:C\to X$ from a smooth projective curve. We consider the following 
commutative diagram:
 \begin{equation*}\label{eq:line}
\xymatrix{  C_K \ar[d]_{f_K} \ar[r]^{\pi_C} \ar@{}[dr]|\circlearrowleft & C \ar[d]^{f}\\
 X_K \ar[d] \ar[r]^{\pi} \ar@{}[dr]|\circlearrowleft & X \ar[d]\\
 \mathrm{Spec} K  \ar[r] & \mathrm{Spec} k  }
\end{equation*}
where $C_K:=C\times_{k}K$. By the assumption, 
$\mathrm{deg}_{K}{\pi_C}^*(f^*D) \geq 0$. 
Hence $\mathrm{deg}_{k} f^*D \geq 0$ by Lemma \ref{liu}. Thus $D$ is nef.
\end{proof}
\end{say}

Let us start the proof of Theorem \ref{thm1}. 

\begin{proof}[Proof of {\em{Theorem \ref{thm1}}}]
First, we note that $-K_X$ is semi-ample by the Kawamata--Shokurov base point 
free theorem and that $-K_Y$ is nef by Theorem \ref{kmm1}. 
It is sufficient to show that $(-K_Y)^{\dim Y}>0$. 
By the Stein factorization, we can assume that $f$ has connected fibers. 
We can 
take a finitely generated $\mathbb{Z}$-algebra $A$, 
a non-empty affine open set $U \subseteq \mathrm{Spec} A$, and smooth 
morphisms $\varphi : \mathcal{X} \to U$ and $\psi : \mathcal{Y} \to U$ such that  
$$\xymatrix{  \mathcal{X} \ar[dr] \ar[rr]^{F}&  & \mathcal{Y} \ar[dl]\\
&U&}$$
and  $F \simeq f$ over the generic point of $U$ and that $-K_{\mathcal {X}}$ is semi-ample. 
We take a general closed point $\mathfrak{p} \in U$. 
Note that the residue field $k:=\kappa(\mathfrak{p})$ of $\mathfrak{p}$ has 
positive characteristic $p$. Let $f_{p}:X_p \to Y_p$ be the fiber of $F$ at $\mathfrak{p}$, 
and let $K$ be an algebraic closure of $k$. By Theorem \ref{ss3}, 
we may assume that $X_p$ is globally $F$-regular. 
Let $\overline{f_p}:\overline{X_p} \to \overline{Y_p}$ 
be the base change of $f_p$ by $\mathrm{Spec} K$, 
where $\overline{X_p}:=X_p \times_{k} K$ and $\overline{Y_p}:=Y_p \times_{k}K$. 
Since $-K_{\mathcal X}$ is semi-ample, we see that 
$-K_{\overline{X_{p}}}$ is semi-ample. 
In particular, $-K_{\overline {X_p}}$ is nef. 
Hence, we obtain that $-K_{\overline{Y_{p}}}$ is nef by Theorem \ref{kmm1}. 
By Lemma \ref{change}, $-K_{Y_p}$ is nef. 
By Theorem \ref{ss2}, $Y_{p}$ is globally $F$-regular. 
Hence $-K_{Y_p}$ is nef and big. Thus $(-K_{Y_p})^{\mathrm{dim}Y}>0$. 
Since $\psi$ is flat, $(-K_{Y})^{\mathrm{dim}Y}>0$. Therefore, 
$-K_Y$ is nef and big.
\end{proof}

%%%%%%%%%%%%%%%%%%%%%%%%%%


\begin{thebibliography}{KMM}

\bibitem[A1]{ambro1} 
F.~Ambro, Shokurov's boundary property, 
J. Differential Geom. {\textbf{67}} (2004), no. 2, 229--255.

\bibitem[A2]{ambro2} 
F.~Ambro, The moduli $b$-divisor of an lc-trivial fibration, 
Compos. Math. {\textbf{141}} (2005), no. 2, 385--403.

\bibitem[D]{debarre} 
O.~Debarre, {\em{Higher-dimensional algebraic geometry}}, 
Universitext. Springer-Verlag, New York, 2001.

\bibitem[F1]{fujinoa}
O.~Fujino, Applications of Kawamata's positivity theorem, 
Proc. Japan Acad. Ser. A Math. Sci. {\textbf{75}} (1999), no. 6, 75--79.

\bibitem[F2]{fujino-nagoya} 
O.~Fujino, A canonical bundle formula for certain algebraic fiber spaces and 
its applications, Nagoya Math. J. {\textbf{172}} (2003), 129--171. 

\bibitem[F3]{fujino-ka} 
O.~Fujino, On Kawamata's theorem, 
preprint (2007), to appear in 
the proceedings of the \lq\lq Classification of 
Algebraic Varieties\rq\rq conference, Schiermonnikoog, Netherlands, May 10--15, 
2009. 

\bibitem[G]{gongyo} Y.~Gongyo, 
Images of varieties of Fano type, preprint (2010). 

\bibitem[H]{hartshorne} 
R.~Hartshorne, {\em{Algebraic geometry}}, 
Graduate Texts in Mathematics, No. {\textbf{52}}. Springer-Verlag, 
New York-Heidelberg, 1977.

\bibitem[K1]{kawamata1} 
Y.~Kawamata, 
Subadjunction of log canonical divisors for a subvariety of codimension $2$, 
Birational algebraic geometry (Baltimore, MD, 1996), 79--88, 
Contemp. Math., {\textbf{207}}, Amer. Math. Soc., Providence, RI, 1997.

\bibitem[K2]{kawamata} 
Y.~Kawamata, 
Subadjunction of log canonical divisors. II, 
Amer. J. Math. {\textbf{120}} (1998), no. 5, 893--899.

\bibitem[KMM]{kmm} 
J.~Koll\'ar, Y.~Miyaoka, S.~Mori, 
Rational connectedness and boundedness of Fano manifolds, 
J. Differential Geom. {\textbf{36}} (1992), no. 3, 765--779. 

\bibitem[KM]{km} 
J.~Koll\'ar, S.~Mori, 
{\em{Birational geometry of algebraic varieties}}. 
With the collaboration of C. H. Clemens and A. Corti. Translated 
from the 1998 Japanese original. Cambridge 
Tracts in Mathematics, {\textbf{134}}. Cambridge 
University Press, Cambridge, 1998.

\bibitem[L]{l}
Q. Liu, 
{\em{Algebraic geometry and arithmetic curves}}, 
Translated from the French by Reinie Ern\`e. Oxford Graduate 
Texts in Mathematics, {\textbf{6}}. Oxford Science 
Publications. Oxford University Press, Oxford, 2002. 

\bibitem[M]{miyaoka} 
Y.~Miyaoka, Relative deformations of morphisms and applications to fibre spaces, 
Comment. Math. Univ. St. Paul. {\textbf{42}} (1993), no. 1, 1--7.

\bibitem[PS]{ps} 
Yu.~G.~Prokhorov, V.~V.~Shokurov, 
Towards the second main theorem on complements, 
J. Algebraic Geom. {\textbf{18}} (2009), no. 1, 151--199.

\bibitem[SS]{ss} 
K.~Schwede, K.~Smith, 
Globally $F$-regular and log Fano varieties, 
preprint (2009). arXiv:0905.0404v2

\bibitem[S]{karen} 
K.~Smith, Globally $F$-regular varieties: applications 
to vanishing theorems for quotients of 
Fano varieties, 
Dedicated to William Fulton on the occasion of 
his 60th birthday. Michigan Math. J. {\textbf{48}} (2000), 553--572.

\bibitem[W]{wis} 
J.~A.~Wi\'sniewski, 
On contractions of extremal rays of Fano manifolds, 
J. Reine Angew. Math. {\textbf{417}} (1991), 141--157.

\bibitem[Y]{yasutake} 
K.~Yasutake, 
On the classification of rank $2$ almost Fano bundles on projective 
space, preprint (2010). arXiv:1004.2544v2

\bibitem[Z]{zhang} 
Qi.~Zhang, On projective manifolds with nef anticanonical bundles, 
J. Reine Angew. Math. {\textbf{478}} (1996), 57--60. 

\end{thebibliography}
\end{document}